\newtheorem{theorem}{Theorem}[section]
\newtheorem{lemma}[theorem]{Lemma}
\newtheorem{corollary}[theorem]{Corollary}
\theoremstyle{definition}
\newtheorem{definition}[theorem]{Definition}
\theoremstyle{remark}
\newtheorem{remark}[theorem]{Remark}
\numberwithin{equation}{section}
\newcommand{\C}{\mathbb{C}}
\newcommand{\D}{\mathbb{D}}
\begin{document}

\title[Separation of Boundary Singularities]
      {Separation of Boundary Singularities for Holomorphic Generators}

\author[M. Elin]{Mark Elin}

\address{Department of Applied Mathematics,
         ORT Braude College,
         P.0. Box 78,
         20101 Karmiel,
         Israel}

\email{mark\_elin@braude.ac.il}

\thanks{This research is part of the European Science Foundation Networking Programme HCAA}

\author[D. Shoikhet]{David Shoikhet}

\address{Department of Applied Mathematics,
         ORT Braude College,
         P.0. Box 78,
         20101 Karmiel,
         Israel}

\email{davs@braude.ac.il}

\thanks{The second author gratefully acknowledges the support of
        the Deutsche Forschungs\-gemeinschaft.}

\author[N. Tarkhanov]{Nikolai Tarkhanov}

\address{Institute of Mathematics,
         University of Potsdam,
         Am Neuen Palais 10,
         14469 Potsdam,
         Germany}

\email{tarkhanov@math.uni-potsdam.de}

\date{December 23, 2009}


\subjclass [2000] {Primary 37Fxx; Secondary 30D05, 32M05}

\keywords{Semigroup,
          holomorphic map,
          unit disk,
          angular derivatives}

\begin{abstract}
We prove a theorem on separation of boundary null points for generators of
continuous semigroups of holomorphic self-mappings of the unit disk in the
complex plane.
Our construction demonstrates the existence and importance of a particular
role of the binary operation $\circ$ given by $1 / f \circ g = 1/f + 1/g$
on generators.
\end{abstract}

\maketitle

\tableofcontents

\section{Introduction}
\label{s.Introduction}

A flow is a one-parameter semigroup of transformations acting on a set
$\mathcal{X}$ called the phase space of the flow.
Typically, the phase space is endowed with an additional structure which the
transformations are required to respect.
In other words, associated to each $t \geq 0$ there is a self-mapping $g_t$ of
$\mathcal{X}$, such that
   $g_0$ is the identity mapping of $\mathcal{X}$
and
   $g_{s+t} = g_s g_t$ for all $s, t \geq 0$.
A cascade differs from a flow in that the mappings $g_t$ are only defined for
$t = 0, 1, \ldots$.
In this case, one uses $n$ instead of $t$.

The additional structure in the phase space is
   either the structure of a topological space
   or of a manifold.
In the first case, it is required that $g_t x$ be continuous with respect to
$(t,x)$. In the second case, smooth dynamical systems are defined.
A cascade is smooth if all the $g_n$ are smooth mappings.
A flow is smooth if $g_t x$ depends smoothly on $(t,x)$ and
   $v (x) = (d/dt) g_t x\, |_{t=0}$
is a smooth vector field on $\mathcal{X}$.
The latter completely determines the flow.
For a fixed $x_0$ and variable $t$, $g_t x_0$ is a solution of
   the differential equation $\dot{x} = v (x)$ with
   initial condition $x (0) = x_0$;
that is, $g_t x_0 = x (t)$,
   see \cite{Anos88}.

A complex dynamical system is defined as any one-parameter semigroup of
holomorphic self-mappings of a complex space $\mathcal{X}$.
Complex dynamical systems are not only of intrinsic interest in mathematics,
they are also of great significance in diverse applications, e.g.,
   in the theory of branching processes \cite{Harr63}, \cite{Seva71},
   in the geometry of Banach spaces \cite{Araz87},
   in control theory and optimization \cite{HelmMoor94} and
   in the theory of composition operators \cite{Shap93}.

If $\mathcal{X}$ is of finite dimension then each continuous complex dynamical
system on $\mathcal{X}$ must be differentiable,
   see \cite{Abat88}.
In this paper, Abate also proves a condition on a holomorphic mapping $f$ of
$\mathcal{X}$ which is necessary and sufficient for $f$ to be a generator of
some complex flow on $\mathcal{X}$.
Similar results had already been published in \cite{BerkPort78} in the
one-dimensional case.

We recall that holomorphic self-mappings of a domain $\mathcal{X}$ in a
complex Banach space are nonexpanding with respect to each pseudometric
generated by a Schwarz-Pick system \cite{Harr79}.
Thus, the question arises whether a fixed point theory like that for monotone
and nonexpanding operators can be developed for holomorphic mappings.
In the one-dimensional case the classical Denjoy-Wolff theorem yields not only
information on the configuration of fixed points but also on the behaviour of
iterates of a holomorphic self-mapping.
In \cite{Kubo83},
   \cite{MacC83},
   \cite{Merc93}
the Denjoy-Wolff theory is developed for higher-dimensional complex spaces
under certain assumptions on domains and self-mappings.

In this paper we study complex dynamical systems on the unit disk $\D$ in the
complex plane $\C$.
The generators of continuous semigroups of holomorphic self-mappings of $\D$
constitute a very special class of holomorphic functions in the disk,
   see \cite{BerkPort78}.
It has the structure of a real cone. The regular null points of
generators are common fixed points for all mappings in the
corresponding semigroups. Among them are the so-called
Denjoy-Wolff points of semigroups which may also lie on the
boundary of the disk. Generators may have boundary regular null
points other than the Denjoy-Wolff points.

We focus on boundary regular null points of holomorphic generators $f$, which
are singular points of the vector fields $f$.
Let
   $f$ be the generator of a complex flow on $\D$ with the Denjoy-Wolff point
   $a = 1$
(and so $f (a) = 0$ and $f' (a) \geq 0$).
Suppose that the angular limit values of $f$ also vanish at some points
   $\zeta_1, \ldots, \zeta_N$
on $\partial \D$.
Associated to each pair $(a,\zeta_n)$ there is a unique generator $g_n$ of a
complex one-parameter group on $\D$ with fixed points $a$ and $\zeta_n$, such
that the function $h$ defined by
   $1/h = 1/f - 1/g_n$
is a generator of a complex semigroup on the disk which has no singularity at
$\zeta_n$, for $n = 1, \ldots, N$.
On repeating this argument $N$ times we represent $1/f$ as the sum of all
   $1/g_n$
modulo $1/h$, where
   $h$ is a holomorphic generator on $\D$ of the same type as $f$ but without
   singularities at $\zeta_1, \ldots, \zeta_N$.

Note that these considerations are also closely related to the boundary
Nevan\-linna-Pick interpolation problem for holomorphic mappings
   (see, for example, \cite{Sa} and \cite{BH}).

We next establish distortion theorems which give very sharp estimates for the
remainder
   $1/f - \sum 1/g_n$
in terms of $f' (\zeta_n)$.

In parallel, we prove similar results for generators of flows on $\D$ with
interior Denjoy-Wolff points.

\section{Semigroups of analytic functions}
\label{s.soaf}

Let $\D$ be the open unit disk around the origin in the complex plane $\C$.
Given an open set $\mathcal{W}$ in $\C$, we denote by
   $H (\D,\mathcal{W})$
the set of holomorphic functions in $\D$ with values in $\mathcal{W}$.
If $\mathcal{W} = \C$, we simply write
   $H (\D)$
for the set of all holomorphic functions in $\D$.

A family
   $\mathcal{S} = \{ F_n \}_{n = 0, 1, \ldots}$
of holomorphic self-mappings of $\D$ is said to be a one-parameter semigroup
with discrete time if
   1) $F_{m+n} = F_m \circ F_n$ for all $m, n = 0, 1, \ldots$,
and
   2) $F_0 (z) = z$.
Such a semigroup actually consists of the iterates of $F = F_1$, for
   $F_0 = I$, the identity mapping of $\D$, because of 2),
and
   $F_n = F \circ F_{n-1}$ for all $n = 1, 2, \ldots$, because of 1).

\begin{definition}
\label{d.semigroup}
A family
   $\mathcal{S} = \{ F_t \}_{t \geq 0}$
of holomorphic self-mappings of $\D$ is called a one-parameter semigroup with
continuous time if
   1) $F_{s+t} = F_s \circ F_t$ for all $s, t \geq 0$, and
   2) $F_0 (z) = z$.
\end{definition}

For a one-parameter semigroup with continuous time, the mere continuity on the
right at $t = 0$ implies the differentiability, hence, the continuity, in $t$
on all of $[0,\infty)$
   (see \cite{BerkPort78}).

The (infinitesimal) generator of a semigroup
   $\mathcal{S} = \{ F_t \}_{t \geq 0}$,
is understood to be a function $f \in H (\D)$ defined by
\begin{equation}
\label{eq.generator}
   f (z)
 := \lim_{t \to 0^+} \frac{1}{t} \left( z - F_t (z) \right).
\end{equation}

The semigroup can be restored from its generator $f$ in the following way.
For any $z \in \D$, find a solution $z (t)$ of the initial problem
   $\dot{z} (t) + f (z (t)) = 0$, if $t \geq 0$, and
   $z (0) = z$.
Then $F_t (z): = z (t)$ for $t \geq 0$. Note that if both $f$ and
                  $-f$
are generators on the disk $\D$, then the semigroup
   $\mathcal{S} = \{ F_t \}_{t \geq 0}$
generated by $f$ can be extended to a one-parameter group of automorphisms of
$\D$ with the property $(F_t)^{-1} = F_{-t}$ for all $t \in \mathbb{R}$.

The following characterisation of semigroup generators can be found in
   \cite{BerkPort78} and
   \cite{AharReicShoi99}
(see also \cite{Shoi01}).

\begin{theorem}
\label{t.generator}
For every holomorphic function $f\,$ in $\D\,$, the following are equivalent:

1)
$f$ is a semigroup generator on $\D$.

2)
$f$ admits the representation
   $f (z) = (z - a) (1 - \bar{a} z) p (z)$,
where
   $a \in \overline{\D}$
and
   $p$ is a holomorphic function in $\D$ with nonnegative real part.

3)
$f$ is of the form
   $f (z) = c - \bar{c} z^2 + z q (z)$,
where
   $q$ is a holomorphic function in $\D$ with nonnegative real part.
Moreover, $f$ generates a group of automorphisms of $\D$ if and only if
   $\Re q (z) \equiv 0$.
\end{theorem}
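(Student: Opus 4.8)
The three conditions are best linked through the \emph{flow-invariance criterion}: by the reconstruction recalled after \eqref{eq.generator}, $f$ is a generator exactly when, for every $z_0\in\D$, the solution of $\dot z(t)+f(z(t))=0$, $z(0)=z_0$, stays in $\D$ for all $t\ge 0$, and the governing quantity is the sign of $\Re\big(\overline{z}f(z)\big)$ as $|z|\to 1$. The technical backbone of the whole argument is a pair of boundary identities valid for $|z|=1$. For the factored form one computes $\overline{z}(z-a)(1-\bar a z)=(1-a\overline{z})(1-\bar a z)=|1-\bar a z|^2$, whence $\Re\big(\overline{z}f(z)\big)=|1-\bar a z|^2\,\Re p(z)$. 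For the additive form one finds $\overline{z}f(z)=c\overline{z}-\bar c z+q(z)$, and since $c\overline{z}-\bar c z$ is purely imaginary, $\Re\big(\overline{z}f(z)\big)=\Re q(z)$. In both cases the boundary real part is nonnegative precisely because $\Re p\ge0$, respectively $\Re q\ge0$.

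Using these identities I would first prove $(2)\Rightarrow(1)$. Setting $\rho(t)=|z(t)|^2$ gives $\dot\rho=2\Re(\overline{z}\dot z)=-2\Re(\overline{z}f(z))$; the boundary identity shows $\dot\rho\le 0$ whenever $z(t)$ approaches $\partial\D$, so the trajectory cannot leave the disk and the solution is global, and $F_t(z_0):=z(t)$ furnishes the semigroup. The equivalence $(2)\Leftrightarrow(3)$ I would establish directly: taking $c:=f(0)$ forces $f(z)-c+\bar c z^2$ to vanish at the origin, so $q(z):=(f(z)-c+\bar c z^2)/z$ is holomorphic, and on $|z|=1$ the two identities give $\Re q=\Re(\overline{z}f)=|1-\bar a z|^2\,\Re p\ge0$; the minimum principle for the harmonic function $\Re q$ then propagates this sign into $\D$. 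Reading $a$ off the zero of $f$ gives the reverse passage.

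For $(1)\Rightarrow(2)$ I would invoke the Denjoy-Wolff point $a\in\overline{\D}$ of the semigroup: it is a common fixed point of every $F_t$, and differentiating $F_t(a)=a$ at $t=0$ yields $f(a)=0$. Hence $p(z):=f(z)/[(z-a)(1-\bar a z)]$ is holomorphic in $\D$, the denominator being nonzero there and absorbing the zero of $f$. The self-map property of each $F_t$ forces the necessary boundary sign $\Re(\overline{z}f(z))\ge 0$, and dividing by $|1-\bar a z|^2\ge0$ shows $\Re p\ge0$ on $\partial\D$, hence on $\D$ by the minimum principle; this produces representation (2), and with it (3). The group statement follows from the symmetry of the criterion: $f$ and $-f$ are simultaneously generators iff $\Re(\overline{z}f)\equiv0$ on $\partial\D$, which by the second identity means $\Re q\equiv0$ on $\partial\D$ and therefore, $\Re q$ being harmonic and nonnegative, $\Re q\equiv0$ in $\D$.

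The main obstacle is analytic rather than algebraic: a function with nonnegative real part need not extend continuously to $\partial\D$, so both the passage ``boundary inequality $\Rightarrow$ sign throughout $\D$'' and the derivation of the necessary condition $\Re(\overline{z}f)\ge0$ must be carried out through radial boundary values (Fatou, Riesz-Herglotz) together with an exhaustion of $\D$ by slightly smaller disks, rather than by naive evaluation on $|z|=1$. Making the Denjoy-Wolff step uniform when $a\in\partial\D$, where $f(a)=0$ is an angular-limit statement and $f'(a)\ge0$ an angular-derivative condition, is the most delicate point.
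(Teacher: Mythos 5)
The paper does not prove this theorem at all: it attributes the equivalence of 1) and 2) to Berkson--Porta and part 3) to Aharonov--Reich--Shoikhet, so there is no in-paper proof to match your argument against. Judged on its own, your sketch is organized around the right objects (the quantity $\Re(\bar z f(z))$, the two boundary factorizations, the Denjoy--Wolff fixed point), but it contains gaps that are not merely technical.

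First, the necessary condition you extract from the semigroup property is wrong as stated. It is not true that a generator satisfies $\Re(\bar z f(z))\geq 0$ in $\D$, nor even near $\partial\D$: take $f(z)=1-z^{2}$, which generates a group of hyperbolic automorphisms, yet $\Re(\bar z f(z))=(\Re z)(1-|z|^{2})<0$ on the left half of the disk. A trajectory of $\dot z+f(z)=0$ may move outward for a while and still remain in $\D$, so ``the self-map property forces $\Re(\bar z f(z))\geq0$'' does not follow, and your Lyapunov argument for $(2)\Rightarrow(1)$ breaks at the same point: off the circle $\bar z(z-a)(1-\bar a z)$ is not real (let alone equal to $|1-\bar a z|^{2}$), and $\Re(AB)\neq\Re A\cdot\Re B$, so the sign of $\dot\rho$ near $\partial\D$ is not controlled by $\Re p\geq0$ alone. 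The correct flow-invariance criterion is the interior inequality $\Re(\bar z f(z))\geq(1-|z|^{2})\,\Re(\bar z f(0))$ for all $z\in\D$ (exactly the extra term your boundary identity for form 3) discards), and its necessity is obtained by differentiating the Schwarz--Pick inequality for $F_{t}$ at $t=0^{+}$, where it holds with equality --- an argument absent from your sketch.

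Second, the step ``nonnegative on $\partial\D$, hence nonnegative in $\D$ by the minimum principle'' is invalid for the functions at hand. The functions $p$ and $q$ need not extend to $\partial\D$; they have at best radial limits a.e., and a harmonic function is not determined by, nor minorized through, its a.e.\ radial boundary values: $u(z)=-\Re\frac{1+z}{1-z}$ is negative throughout $\D$ with boundary values $0$ a.e. You name this as ``the main obstacle'' and point to Fatou and Riesz--Herglotz, but you do not resolve it, and it is precisely here (together with the interior form of the flow-invariance inequality) that the actual content of the Berkson--Porta and Aharonov--Reich--Shoikhet proofs lies. The algebraic equivalence $(2)\Leftrightarrow(3)$ via $c=f(0)$ and the identification of $a$ with the Denjoy--Wolff point are fine, but as it stands neither implication involving 1) is established.
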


The equivalence of 1) and
                   2)
is due to
   \cite{BerkPort78},
and the part 3) was proved in \cite{AharReicShoi99}.

If a semigroup $\mathcal{S}$ (with either discrete or continuous time)
   is not trivial (i.e. it does not reduce to the only identity mapping)
and
   contains no elliptic automorphisms of $\D$,
then there is a unique point $a$ in the closure of $\D$, such that
   $F_t (z) \to a$ as $t \to \infty$,
for all $z \in \D$.
This point $a$ is called the Denjoy-Wolff point of $\mathcal{S}$.
For continuous semigroups, the point
   $a \in \overline{\D}$
in the representation 2) of Theorem \ref{t.generator} is exactly the
Denjoy-Wolff point of $\mathcal{S}$.
Hence it follows that this representation is unique.

In fact, it is sufficient to consider two distinct cases, namely
   $a = 0$ (the interior or dilation case) and
   $a = 1$ (the boundary case).

For each $F \in H (\D,\D)$, the mapping $f = I - F$ is a generator of a
continuous semigroup
   $\mathcal{S} = \{ F_t \}_{t \geq 0}$,
see for instance \cite{ReicShoi05}.
Note that $F_1 \neq F$, however, the set of fixed points of $F_1$
                                               (and that of $F$, as is clear)
coincides with the set of null points of $f$.

If
   $h$ is a holomorphic function in $\D$ and
   $\zeta \in \partial \D$,
then
$$
   \angle \lim_{z \to \zeta} h (z) =: h (\zeta)
$$
for the angular (or nontangential) limit of $h$ at $\zeta$,
   see e.g., \cite{Pomm92}.
In \cite{ElinShoi01} it was shown that if $a \in \partial \D$ is the
Denjoy-Wolff point of a continuous semigroup $\mathcal{S}$ generated by $f$
then the angular derivative
\begin{eqnarray*}
   f' (a)
 & := &
   \angle \lim_{z \to a} f' (z)
\\
 & = &
   \angle \lim_{z \to a} \frac{f (z)}{z-a}
\end{eqnarray*}
exists and is a nonnegative real number, i.e. $f' (a) \geq 0$.

If $f' (a) = 0$, then $f$ (and $\mathcal{S}$) is said to be of parabolic type.
Otherwise, if $f' (a) > 0$, then $f$ (respectively, $\mathcal{S}$) is said to
be of hyperbolic type.

In general, a point $\zeta \in \partial \D$ is called a boundary regular null
point of $f \in H (\D)$ if
   the angular limit of $f$ at $\zeta$ (exists and) is equal to $0$
and
   the angular derivative of $f$ at $\zeta$ (exists and) is finite.
Similarly, a point $\zeta \in \partial \D$ is called a boundary regular fixed
point of $F \in H (\D,\D)$ if
   the angular limit of $F$ at $\zeta$ (exists and) is equal to $\zeta$
and
   the angular derivative of $F$ at $\zeta$ (exists and) is finite.
Thus, the boundary Denjoy-Wolff point of a semigroup is a boundary regular
null point of the semigroup generator.
More generally, it follows from
   \cite{ElinShoi01},
   \cite{Shoi03} and
   \cite{ContDiaz05}
that each boundary regular null point of a generator $f$ is a boundary regular
fixed point of each mapping $F_t$ of the generated semigroup, and
   $F_t' (\zeta) = \exp (- f' (\zeta) t)$,
with $f' (\zeta)$ being a real number.

\section{Remarks on Cowen-Pommerenke type inequalities}
\label{s.roCPti}

Recall again that if
   $a \in \overline{\D}$ is the Denjoy-Wolff point of a single holomorphic
   self-mapping $F$ of $\D$
then
   either $a \in \D$ and $|F' (a)| < 1$
   or $a \in \partial \D$ and $0 < F' (a) \leq 1$,
which is due to the classical Schwarz and Julia's lemmas.
So, for each boundary regular fixed point $\zeta$ of $F$ different from $a$,
we get $F' (\zeta) > 1$.
The following result is proved in \cite{CowePomm82}.

\begin{theorem}
\label{t.CowePomm82}
Let
   $F$ be a holomorphic self-mapping of the unit disk $\D$ with the
   Denjoy-Wolff point $a \in \overline{\D}$
and let
   $\zeta_1, \ldots, \zeta_N$ be boundary regular fixed points of $F$
   different from $a$.

1)
If $a = 0$ (dilation case), then
$$
   \sum_{n=1}^N \frac{1}{F' (\zeta_n) - 1}
 \leq \Re \Big( \frac{1 + F' (0)}{1 - F' (0)} \Big).
$$

2)
If $a = 1$ and $F' (a) < 1$ (hyperbolic case), then
$$
   \sum_{n=1}^N \frac{1}{F' (\zeta_n) - 1}
 \leq \frac{F' (1)}{1 - F' (1)}.
$$

3)
If $a = 1$ and $F' (a) = 1$ (parabolic case), then
$$
   \sum_{n=1}^N \frac{|1 - \zeta_n|^2}{F' (\zeta_n) - 1}
 \leq 2\, \Re \Big( \frac{1}{F (0)} - 1 \Big).
$$
Moreover, equality holds if and only if $F$ is a Blaschke product
   of order $N+1$ in the case 1)
or
   of order $N$ in the cases 2) and 3).
\end{theorem}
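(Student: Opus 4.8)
The plan is to attach to $F$ an auxiliary holomorphic function $p$ with nonnegative real part, to feed it into the Herglotz representation
$p(z) = i\,\Im p(0) + \int_{\partial\D} \frac{\zeta+z}{\zeta-z}\,d\mu(\zeta)$ with $\mu \geq 0$, and then to read off the three inequalities as the statement ``the point masses of $\mu$ do not exceed its total mass''. The point masses will sit exactly at the boundary regular fixed points $\zeta_n$, and I will arrange $p$ so that $\mu(\{\zeta_n\})$ equals the $n$-th summand on the left-hand side, while $\mu(\partial\D) = \Re p(0)$ (or a designated boundary value of $p$) produces the right-hand side. Since the $\zeta_n$ are distinct and $\mu \geq 0$, this gives $\sum_n \mu(\{\zeta_n\}) \leq \mu(\partial\D)$, i.e. the asserted bound. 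Throughout, the mass at a point $\zeta_0$ is extracted from the nontangential limit $\lim_{z\to\zeta_0}(\zeta_0 - z)\,p(z) = 2\zeta_0\,\mu(\{\zeta_0\})$, combined with the Julia--Wolff--Carath\'eodory expansion $F(z) = \zeta_0 + F'(\zeta_0)(z-\zeta_0) + o(z-\zeta_0)$ valid at each boundary regular fixed point.

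For the dilation case 1) I would take $p(z) = \frac{z+F(z)}{z-F(z)}$. Because $F(0)=0$, the only zero of $z - F(z)$ in $\D$ is the removable one at the origin, so $p \in H(\D)$, and a one-line computation gives $\Re p(z) = \frac{|z|^2 - |F(z)|^2}{|z-F(z)|^2} \geq 0$ by the Schwarz lemma. Evaluating at the origin yields $p(0) = \frac{1+F'(0)}{1-F'(0)}$, hence $\mu(\partial\D) = \Re\frac{1+F'(0)}{1-F'(0)}$, which is the right-hand side. At a boundary fixed point $\zeta_n$ the expansion above gives $z - F(z) = (1-F'(\zeta_n))(z-\zeta_n) + o(z-\zeta_n)$ and $z+F(z)\to 2\zeta_n$, so the mass formula produces $\mu(\{\zeta_n\}) = \frac{1}{F'(\zeta_n)-1}$, exactly the summand. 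This settles part 1).

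For the boundary cases the same $p$ fails, since Schwarz no longer forces $|F(z)|\leq|z|$. Instead I would use $p(z) = \frac{(1-z)(1-F(z))}{2(F(z)-z)}$, which is the transplant to $\D$, under the Cayley map $w = \frac{1+z}{1-z}$ carrying the Denjoy--Wolff point $1$ to $\infty$, of $\frac{1}{G(w)-w}$ for $G = C\circ F\circ C^{-1}$. The essential check is again $\Re p \geq 0$; granting it, one has $p(1)=0$ (so no mass escapes to the Denjoy--Wolff point), $\Re p(0) = \frac12\Re\big(\frac{1}{F(0)}-1\big)$, and the same limit computation, using the identity $\frac{-(1-\zeta_n)^2}{\zeta_n} = |1-\zeta_n|^2$, gives $\mu(\{\zeta_n\}) = \frac{|1-\zeta_n|^2}{4(F'(\zeta_n)-1)}$. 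Summing over $n$ and clearing the factor $4$ is precisely part 3).

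The hyperbolic case 2) is the delicate one: its bound $\frac{F'(1)}{1-F'(1)}$ carries the multiplier at the Denjoy--Wolff point while its left-hand side is unweighted, and one checks that the function of part 3) does not deliver it. Here I would first \emph{separate the Denjoy--Wolff singularity}, writing $F = \Phi\circ\Psi$ with $\Phi$ the hyperbolic automorphism fixing $1$ with $\Phi'(1)=F'(1)$, so that the remainder $\Psi$ is of parabolic type at $1$ and the multiplier $\lambda = F'(1)$ enters through $\frac{\lambda}{1-\lambda}$; this is exactly the operation $1/(f\circ g) = 1/f + 1/g$ flagged in the abstract. I expect the main obstacle to be twofold: establishing the positivity $\Re p \geq 0$ in the boundary cases (a boundary Julia-type inequality in place of the interior Schwarz lemma), and tracking how the fixed points $\zeta_n$ behave under the separation so that the Denjoy--Wolff point contributes no negative mass. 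The equality statement is then routine: a positive-real-part function whose Herglotz measure is finite and purely atomic is rational, so $F = z\,\frac{p-1}{p+1}$ (respectively its boundary analogue) is a finite Blaschke product, and a degree count gives order $N+1$ in case 1) — the extra unit coming from the interior fixed point — and order $N$ in the boundary cases 2) and 3).
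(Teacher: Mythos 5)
First, a point of reference: the paper does not prove Theorem \ref{t.CowePomm82} at all --- it quotes it from \cite{CowePomm82} --- so there is no in-paper argument to compare against, and your proposal must stand on its own. Your treatment of parts 1) and 3) does: it is essentially the classical Herglotz-measure argument. In case 1) the function $p=(z+F)/(z-F)$ satisfies $\Re p=(|z|^{2}-|F|^{2})/|z-F|^{2}\geq 0$ by the Schwarz lemma, $p(0)=(1+F'(0))/(1-F'(0))$, and the Julia--Wolff--Carath\'eodory expansion at $\zeta_n$ gives $\mu(\{\zeta_n\})=1/(F'(\zeta_n)-1)$; in case 3) the positivity of $\Re\,\frac{(1-z)(1-F)}{2(F-z)}$ is exactly Julia's lemma at the Denjoy--Wolff point (in the half-plane model, $\Re G(w)\geq\Re w$), and your mass computation, including the identity $-(1-\zeta_n)^{2}/\zeta_n=|1-\zeta_n|^{2}$, checks out. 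The equality discussion is thin but salvageable along the lines you indicate.

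Part 2), however, has a genuine gap. The factorization $F=\Phi\circ\Psi$ with $\Phi$ a hyperbolic automorphism does make $\Psi=\Phi^{-1}\circ F$ parabolic at $1$, but $\Psi(\zeta_n)=\Phi^{-1}(\zeta_n)\neq\zeta_n$ for every $\zeta_n$ other than (at most) the second fixed point of $\Phi$, so the $\zeta_n$ cease to be fixed points of $\Psi$ and part 3) cannot be applied to it; and even if they were preserved, part 3) would return the $|1-\zeta_n|^{2}$-weighted sum, not the unweighted sum that part 2) asserts. Moreover, the identity $1/(f\circ g)=1/f+1/g$ you invoke is the paper's binary operation on \emph{generators} (reciprocals of vector fields add), not a statement about composition of self-mappings, so it cannot transport the multiplier $F'(1)$ into the bound $F'(1)/(1-F'(1))$ in the way you hope. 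A route that does work, and is the one this paper actually develops later (Theorems \ref{t.completionCowePomm82} and \ref{t.P2}), is to pass to the generator $f(z)=-(1-z)^{2}\bigl(\frac{1+F(z)}{1-F(z)}-\frac{1+z}{1-z}\bigr)$, for which $f'(1)=2(1/F'(1)-1)>0$ and $f'(\zeta_n)=-2(F'(\zeta_n)-1)$, and to prove the unweighted inequality $\sum_{n}1/|f'(\zeta_n)|\leq 1/f'(1)$ by the angle-counting argument for the associated starlike function $\sigma$ solving $f\sigma'=f'(1)\sigma$; that is a genuinely different mechanism from the Herglotz point-mass count, and it is precisely what supplies the hyperbolic case.
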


Using Theorem \ref{t.CowePomm82} one establishes an important sharp inequality
for contact points of a holomorphic self-mapping $F$ of $\D$,
   cf. \textit{ibid}.
Namely, assume that
   there are boundary points $\zeta_1, \ldots, \zeta_N$,
   such that $F (\zeta_n) = w$ for all $n = 1, \ldots, N$,
   with $w \in \partial \D$.
Then
\begin{equation}
\label{eq.CowePomm82}
   \sum_{n=1}^N \frac{1}{|F' (\zeta_n)|}
 \leq \Re \Big( \frac{w + F (0)}{w - F (0)} \Big).
\end{equation}
Moreover, equality is valid if and only if $F$ is a Blaschke product of order
$N$.

In \cite{ContDiazPomm06}, a geometrical approach and
                            generating theory
for continuous semigroups is developed to derive another sharp inequality for
functions with contact points.
We present it as follows.

\begin{theorem}
\label{t.sharp}
Let
   $F$ be a holomorphic self-mapping of $\D$,
such that
   $F (1) = 1$ and
   $F (\zeta_n) = w$ for some boundary points $\zeta_1, \ldots, \zeta_N$,
with $w \neq 1$.
Suppose that the angular derivatives $F' (1)$ and
                                     $F' (\zeta_1), \ldots, F' (\zeta_N)$
are finite.
Then
$$
   \sum_{n=1}^N \frac{1}{|F' (\zeta_n)|} \frac{1 - \Re w}{1 - \Re \zeta_n}
 \leq F' (1),
$$
and equality holds if and only if
$
   \displaystyle
   F (z) = \frac{1 + (1 - w) A (z)}{1 - (1 - w) A (z)},
$
where
$$
   A (z)
 = \sum_{n=1}^N
   \frac{1}{|F' (\zeta_n)|} \frac{\zeta_n}{1-\zeta_n} \frac{z-1}{z-\zeta_n}.
$$
\end{theorem}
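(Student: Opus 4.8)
The plan is to transport the configuration to the right half-plane $\Pi=\{\Re\zeta>0\}$ via the Cayley transform $C(z)=(1+z)/(1-z)$, which sends the fixed point $1$ to $\infty$. Set $\Phi=C\circ F\circ C^{-1}$, a self-map of $\Pi$ with $\Phi(\infty)=\infty$, and write $\sigma_n=C(\zeta_n)$ and $\omega=C(w)$, all lying on $\partial\Pi=i\mathbb{R}$, so that $\Phi(\sigma_n)=\omega$. A chain-rule computation with $C'(z)=2/(1-z)^2$ gives $\Phi'(\sigma_n)=\frac{(1-\zeta_n)^2}{(1-w)^2}F'(\zeta_n)$, whence, using $|1-\zeta|^2=2(1-\Re\zeta)$ on $\partial\D$, one finds $\frac{1}{|\Phi'(\sigma_n)|}=\frac{1-\Re w}{1-\Re\zeta_n}\frac{1}{|F'(\zeta_n)|}$; and the behaviour $\Phi(\zeta)\sim\zeta/F'(1)$ at $\infty$ gives $\Phi'(\infty)=1/F'(1)$. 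The asserted inequality therefore reduces to the purely half-plane statement $\sum_n|\Phi'(\sigma_n)|^{-1}\le F'(1)$, and the weights $\frac{1-\Re w}{1-\Re\zeta_n}$ are exactly the boundary Jacobian factors of $C$, which is the signal that normalising $1\mapsto\infty$ is the correct move.

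Next I would introduce $R=1/(\Phi-\omega)$. Since $\omega\in\partial\Pi$ and $\Phi(\Pi)\subset\Pi$, both $\Phi-\omega$ and its reciprocal map $\Pi$ into $\Pi$, so $\Re R>0$; moreover $R(\infty)=0$ and $R$ has a boundary pole at each $\sigma_n$ with $R(\zeta)\sim(\Phi'(\sigma_n)(\zeta-\sigma_n))^{-1}$, i.e. residue $1/\Phi'(\sigma_n)$. Because $\Re R\ge0$ and $\sigma_n\in i\mathbb{R}$, positivity of $\Re\frac{c}{\zeta-\sigma_n}$ for $\zeta\in\Pi$ forces each residue $c$ to be a positive real, so $\Phi'(\sigma_n)>0$ and the residue is precisely $|\Phi'(\sigma_n)|^{-1}$. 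This reciprocal device is the half-plane counterpart of the additive law $1/f\circ g=1/f+1/g$ highlighted in the abstract: it linearises the contact data into residues that simply add.

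The core is the Herglotz--Nevanlinna representation. A function $R$ of positive real part on $\Pi$ with $R(\infty)=0$ for which $\zeta R(\zeta)$ has a finite limit at $\infty$ admits a representation $R(\zeta)=\frac{1}{\pi}\int_{\mathbb{R}}\frac{d\mu(t)}{\zeta-it}$ with a finite positive measure $\mu$; the poles $\sigma_n=it_n$ are atoms with $|\Phi'(\sigma_n)|^{-1}=\mu(\{t_n\})/\pi$, while $\lim_{\zeta\to\infty}\zeta R(\zeta)=\mu(\mathbb{R})/\pi$. That limit equals $1/\Phi'(\infty)=F'(1)$, and $\sum_n\mu(\{t_n\})\le\mu(\mathbb{R})$ since the atoms are part of the total mass; hence $\sum_n|\Phi'(\sigma_n)|^{-1}\le F'(1)$, which is the theorem. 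Equality forces $\mu$ to be purely atomic and supported on $\{t_n\}$, i.e. $R(\zeta)=\sum_n\frac{\rho_n}{\zeta-\sigma_n}$ with $\rho_n=|\Phi'(\sigma_n)|^{-1}$; transporting this rational $R$ back through $\Phi=\omega+1/R$ and $F=C^{-1}\circ\Phi\circ C$ and simplifying recovers the stated extremal function.

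The step I expect to be the main obstacle is the rigorous justification of this representation: one must show that finiteness of the angular derivatives $F'(1)$ and $F'(\zeta_n)$ is exactly what guarantees, on the half-plane side, that $\mu$ is a finite measure and that each $\sigma_n$ is a genuine simple pole contributing an honest atom, so that the total-mass inequality is both available and sharp. Pinning down the identification of residues with angular derivatives rests on the Julia--Wolff--Carath\'eodory theorem in $\Pi$; the remaining manipulations are routine Cayley-transform bookkeeping.
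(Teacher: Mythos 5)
Your proof is correct, but it is a genuinely different route from the one the paper takes: Theorem \ref{t.sharp} is quoted here from \cite{ContDiazPomm06} without proof, and the paper's own re-derivation (Theorem \ref{t.same}) attaches to $F$ the hyperbolic generator $f(z)=-\frac{(1-z)^2}{1-F(z)}\,\frac{w-F(z)}{w-1}$ and then invokes Theorem \ref{t.completionCowePomm82}, whose proof is geometric -- the starlike solution $\sigma$ of $f\sigma'=m\sigma$ maps $\D$ into a half-plane and each $\zeta_n$ contributes a disjoint corner of opening $|m/m_n|\pi$, so the angles sum to at most $\pi$. Your object $R=1/(\Phi-\omega)$ is, up to a factor $2$ and the Cayley change of variable, exactly the Berkson--Porta function $-(1-z)^2/f(z)=(1-w)\frac{1-F(z)}{F(z)-w}$ underlying the paper's $f$; what differs is the engine converting local data into the global inequality. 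You use the Nevanlinna representation $R(\zeta)=\int_{\mathbb R}\frac{d\mu(t)}{\zeta-it}$: boundedness of $\zeta R(\zeta)$, hence finiteness of $\mu$ with total mass $\mu(\mathbb R)=F'(1)$, follows from Julia's inequality $\Re\Phi(\zeta)\ge\Re\zeta/F'(1)$ at the fixed point, each $\sigma_n$ carries a disjoint atom of mass $1/|\Phi'(\sigma_n)|$ by Julia--Wolff--Carath\'{e}odory, and the inequality is literally ``sum of atoms $\le$ total mass'', with equality forcing $\mu$ purely atomic. This is more elementary and self-contained than the corner-counting argument and makes the equality case transparent; the paper's route buys instead the semigroup and starlike-function structure exploited throughout the rest of the text. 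One caveat on your last step: carrying the atomic $R$ back through the Cayley transform yields $F=\frac{1+(1-w)A}{1-(1-\bar w)A}$, the form recorded in Theorem \ref{t.same}, not the formula printed in Theorem \ref{t.sharp}, which appears to contain a typo (for $N=1$, $\zeta_1=w=-1$, $F'(-1)=1$ it would give $F(z)=1/z$); so do not expect your simplification to reproduce the displayed expression literally.
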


An open question still remaining is if the extremal functions in
   (\ref{eq.CowePomm82}) and
   Theorem \ref{t.sharp}
are the same.
This question is handled in Section \ref{s.bDWp}.

For continuous semigroups of holomorphic self-mappings of the unit disk, an
infinitesimal version of Theorem \ref{t.CowePomm82} related to a boundary
Denjoy-Wolff point is given in
   \cite{ContDiazPomm06}
in a unified form which includes both hyperbolic and parabolic cases.

\begin{theorem}
\label{t.ContDiazPomm06}
Let
   $\mathcal{S} = \{ F_t \}_{t \geq 0}$
be a semigroup in $H (\D,\D)$
   with a generator $f \in H (\D)$ and the Denjoy-Wolff point $a = 1$,
and let
   $\zeta_1, \ldots, \zeta_N$ be boundary regular null points of $f$
   different from $a$,
i.e.
   $f (\zeta_n) = 0$ and
   $f'(\zeta_n) < 0$.
Then
\begin{equation}
\label{eq.ContDiazPomm06}
   \sum_{n=1}^N \frac{1 - \Re \zeta_n}{|f' (\zeta_n)|}
 \leq - \Re \Big( \frac{1}{f (0)} \Big).
\end{equation}
Moreover, equality holds if and only if
   $F_t (z) = \sigma^{-1} (\sigma (z) + t)$ for $t \geq 0$,
where
$$
   \sigma (z)
 = \frac{1}{f (0)} \frac{z}{z-1}
 + 2
   \sum_{n=1}^N
   \frac{1 - \Re \zeta_n}{|f' (\zeta_n)|}
   \Big( \frac{\zeta_n}{(1 - \zeta_n)^2} \log \frac{1-z}{1 - \bar{\zeta}_n z}
       + \frac{\bar{\zeta}_n}{1 - \bar{\zeta}_n} \frac{z}{1 - z}
   \Big).
$$
\end{theorem}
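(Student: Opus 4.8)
The plan is to reduce the statement to an elementary fact about positive measures on $\partial\D$ via the Berkson--Porta representation. Since the Denjoy--Wolff point is $a = 1$, part 2) of Theorem \ref{t.generator} gives $f(z) = -(1-z)^2 p(z)$ with $p \in H(\D)$ and $\Re p \geq 0$. I would then pass to $Q := 1/p$, which again has nonnegative real part, and invoke its Herglotz representation
\[
   Q(z) = i\,\Im Q(0) + \int_{\partial\D}\frac{\eta + z}{\eta - z}\,d\mu(\eta),
\]
where $\mu \geq 0$ is a finite Borel measure on $\partial\D$. Evaluating at $z = 0$ and using $f(0) = -p(0)$, the total mass is $\mu(\partial\D) = \Re Q(0) = -\Re\left(1/f(0)\right)$, which is precisely the right-hand side of \eqref{eq.ContDiazPomm06}.

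The crux is to identify each boundary regular null point $\zeta_n$ of $f$ with an atom of $\mu$ and to compute its mass. Near $\zeta_n$ one has $f(z) \approx f'(\zeta_n)(z - \zeta_n)$, so $Q(z) = -(1-z)^2/f(z)$ blows up like $-(1-\zeta_n)^2/(f'(\zeta_n)(z - \zeta_n))$; feeding this into the standard atom-recovery formula $\mu(\{\zeta_n\}) = \frac{1}{2}\lim_{r\to 1^-}(1-r)\,\Re Q(r\zeta_n)$ and using the identities $(1-\zeta_n)^2/\zeta_n = -2(1 - \Re\zeta_n)$ and $f'(\zeta_n) = -|f'(\zeta_n)|$ yields
\[
   \mu(\{\zeta_n\}) = \frac{1 - \Re\zeta_n}{|f'(\zeta_n)|}.
\]
The inequality \eqref{eq.ContDiazPomm06} is then nothing but the trivial bound $\sum_{n=1}^N \mu(\{\zeta_n\}) \leq \mu(\partial\D)$ for the masses of finitely many distinct atoms.

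For the equality statement I would observe that equality forces $\mu$ to be purely atomic and supported exactly on $\{\zeta_1,\dots,\zeta_N\}$ with masses $c_n = (1 - \Re\zeta_n)/|f'(\zeta_n)|$. Reconstructing $Q$ from this $\mu$, hence $f = -(1-z)^2/Q$, and using that the linearizing (Abel) function satisfies $\sigma' = -1/f = Q/(1-z)^2$ and $F_t = \sigma^{-1}(\sigma + t)$, I would integrate term by term: the constant part of $Q$ produces the term $\frac{1}{f(0)}\frac{z}{z-1}$ (up to an inessential additive constant, as $\sigma$ is determined only up to a constant), while each atom contributes, after a partial-fraction decomposition of $2z/((\zeta_n - z)(1-z)^2)$ and the identity $\log(\zeta_n - z) = \log\zeta_n + \log(1 - \bar\zeta_n z)$, exactly the logarithmic and rational terms displayed in the formula for $\sigma$.

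The main obstacle is the rigorous version of the atom/null-point correspondence: one must show that the existence and finiteness of the angular derivative $f'(\zeta_n)$ is genuinely equivalent to $\mu$ having an atom at $\zeta_n$, with the mass above, and that no boundary mass is lost in the passage to the limit. This is exactly the content of the Julia--Wolff--Carath\'eodory theorem applied to $p$ (equivalently to $Q$), and it is where the hypotheses $f(\zeta_n) = 0$ and $f'(\zeta_n) < 0$ enter decisively. I note that an alternative route --- applying Theorem \ref{t.CowePomm82} to each $F_t$, using $F_t'(\zeta_n) = \exp(|f'(\zeta_n)|t)$, and letting $t \to 0^+$ --- reproduces \eqref{eq.ContDiazPomm06} cleanly from case 3) in the parabolic case, but yields a different inequality from case 2) in the hyperbolic case; this is why I favour the measure-theoretic argument, which treats both types in one stroke.
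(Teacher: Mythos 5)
Your outline is sound and its key computations check out: with the Berkson--Porta form $f(z)=-(1-z)^2p(z)$ one has $\Re Q(0)=-\Re(1/f(0))$ for $Q=1/p$; the existence of the finite angular derivative $f'(\zeta_n)<0$ gives the radial limit $\tfrac12(1-r)Q(r\zeta_n)\to(1-\zeta_n)^2/\bigl(2\zeta_n f'(\zeta_n)\bigr)=(1-\Re\zeta_n)/|f'(\zeta_n)|$, which by dominated convergence in the Herglotz integral is the mass of the atom at $\zeta_n$; and integrating $2z/\bigl((\zeta_n-z)(1-z)^2\bigr)$ by partial fractions does reproduce the stated $\sigma$ exactly, using $\bar\zeta_n/(1-\bar\zeta_n)=1/(\zeta_n-1)$. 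You should know, however, that the paper does not prove this theorem: it is quoted from Contreras, D\'{\i}az-Madrigal and Pommerenke, whose proof the authors describe as resting on the geometry of the image of the Abel linearizer; internally the paper only rederives the inequality in the parabolic case by letting $t\to0^+$ in part 3) of Theorem \ref{t.CowePomm82} (your closing observation about why that limiting argument fails in the hyperbolic case matches the authors' own discussion), and the nearest in-house argument is Corollary \ref{c.bDWp}, obtained from the separation Theorem \ref{t.bDWp} by subtracting one singularity at a time and certifying each remainder is still a generator via Lemma \ref{l.JWC}. Your Herglotz-measure argument is the ``all atoms at once'' version of that same positivity mechanism: it is more self-contained, treats both types uniformly, and delivers the extremal $\sigma$ in precisely the logarithmic form of the statement, whereas the paper's route gives the extremal function as $1/f=\sum 1/g_n+\imath c/(z-1)^2$ and never translates it back into $\sigma$. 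What the iterative decomposition buys instead is the structured remainder $r/h(z)$ with $h'(1)=f'(1)$, which powers the distortion theorems of Section \ref{s.distortion} and which your global argument does not produce.
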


The proof of this theorem is based on the profound study of a linearisation
model for semigroups of holomorphic self-mappings with boundary Denjoy-Wolff
points, which is given by Abel's functional equation
\begin{equation}
\label{eq.Abel}
   \sigma (F_t (z)) = \sigma (z) + t,
\end{equation}
and nice geometric properties of its solution.

The principal significance of (\ref{eq.Abel}) is that it allows the study of
general properties of univalent (i.e. one-to-one)
   functions convex in one direction, as well as
   functions starlike with respect to a boundary point,
and extremal problems for these function classes.
The starlike functions can be obtained as solutions to the so-called
Schr\"{o}der functional equation related to semigroups of hyperbolic type,
cf. for instance \cite{ElinShoi06} and
                 \cite{ElinShoiZalc08}.

Theorem \ref{t.ContDiazPomm06} does not cover the dilation case in which the
Denjoy-Wolff point is inside $\D$.
In addition, in the hyperbolic case the unified form does not allow the use of
some very nice tools based on the fact that $f' (a) > 0$ which is not taken
into consideration in inequality (\ref{eq.ContDiazPomm06}).
On the other hand,
   for parabolic type semigroups,
inequality (\ref{eq.ContDiazPomm06}) can be obtained directly from the part 3)
of Theorem \ref{t.CowePomm82} by using the definition of generators and
                                       Theorem 1 of \cite{ContDiazPomm06}
(see also \cite{ElinShoi06}).

More precisely, let
   $\mathcal{S} = \{ F_t \}_{t \geq 0}$
be a semigroup of parabolic type generated by a function $f \in H (\D)$ with
   $f (1) = 0$ and
   $f' (1) = 0$.
Assuming
   $f (\zeta_n) = 0$ and
   $f' (\zeta_n) < 0$
for some points $\zeta_1, \ldots, \zeta_n$ on $\partial \D$ different from $1$,
we get by Theorem \ref{t.CowePomm82} (part 3))
$$
   \sum_{n=1}^N \frac{|1 - \zeta_n|^2}{F_t' (\zeta_n) - 1}
 \leq 2\, \Re \Big( \frac{1}{F_t (0)} - 1 \Big),
$$
or
$$
   \sum_{n=1}^N \frac{t\, |1 - \zeta_n|^2}{\exp (- t f' (\zeta_n)) - 1}
 \leq 2\, \Re \Big( \frac{t}{F_t (0)} - t \Big)
$$
for all $t \geq 0$.
Letting $t \to 0$ and
using (\ref{eq.generator}) we get
$$
   \sum_{n=1}^N \frac{|1 - \zeta_n|^2}{- f' (\zeta_n)}
 \leq - 2\, \Re \Big( \frac{1}{f (0)} \Big),
$$
which coincides with (\ref{eq.ContDiazPomm06}).
The following infinitesimal analogue of Theorem \ref{t.CowePomm82} can be
proven similarly.

\begin{corollary}
\label{c.CowePomm82}
Let
   $\mathcal{S} = \{ F_t \}_{t \geq 0}$
be a semigroup of holomorphic self-mappings of the disk $\D$ generated by $f$
with
   $f (a) = 0$
and
   $\Re f' (a) \geq 0$ for some $a \in \overline{\D}$,
and let
   $\zeta_1, \ldots, \zeta_N$ be boundary regular null points of $f$ different
   from $a$.
The following is valid:

1)
If $a = 0$, then
$$
   \sum_{n=1}^N \frac{1}{|f' (\zeta_n)|}
 \leq 2\, \Re \Big( \frac{1}{f' (0)} \Big).
$$

2)
If $a = 1$ and $0 < f' (a) < \infty$, then
$$
   \sum_{n=1}^N \frac{1}{|f' (\zeta_n)|}
 \leq \frac{1}{f' (1)}.
$$

3)
If $a = 1$ and $f' (a) = 0$, then
$$
   \sum_{n=1}^N \frac{1 - \Re \zeta_n}{|f' (\zeta_n)|}
 \leq - \Re \Big( \frac{1}{f (0)} \Big).
$$
\end{corollary}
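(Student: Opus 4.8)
The plan is to derive each of the three inequalities from the corresponding part of Theorem~\ref{t.CowePomm82}, applied to the individual maps $F_t$ of the semigroup, by a passage to the limit $t\to0^+$, exactly as in the parabolic computation carried out above for part~3). First I would fix $t>0$ and record that, by the facts quoted in Section~\ref{s.soaf}, each boundary regular null point $\zeta_n$ of $f$ is a boundary regular fixed point of $F_t$ with $F_t'(\zeta_n)=\exp(-f'(\zeta_n)t)$, while the Denjoy--Wolff point $a$ of $\mathcal{S}$ is also the Denjoy--Wolff point of every single map $F_t$: indeed $F_t(a)=a$ together with $|F_t'(0)|=\exp(-t\,\Re f'(0))\le1$ in the dilation case, $F_t'(1)=\exp(-f'(1)t)\in(0,1)$ in the hyperbolic case, and $F_t'(1)=1$ in the parabolic case. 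Since each $\zeta_n$ differs from $a$ we get $F_t'(\zeta_n)>1$, hence $f'(\zeta_n)<0$ and $|f'(\zeta_n)|=-f'(\zeta_n)$. This licenses the application of Theorem~\ref{t.CowePomm82} to $F_t$ in all three configurations.

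For the dilation case $a=0$ I would take part~1) of Theorem~\ref{t.CowePomm82}, substitute $F_t'(0)=\exp(-f'(0)t)$, multiply through by $t$, and send $t\to0^+$. The elementary expansions
$$
   \frac{t}{\exp(-f'(\zeta_n)t)-1}\longrightarrow\frac{1}{|f'(\zeta_n)|},
   \qquad
   t\,\frac{1+\exp(-f'(0)t)}{1-\exp(-f'(0)t)}\longrightarrow\frac{2}{f'(0)}
$$
then produce the asserted bound $\sum 1/|f'(\zeta_n)|\le2\,\Re(1/f'(0))$. The hyperbolic case $a=1$, $0<f'(1)<\infty$, is identical, starting from part~2) and using $t\,F_t'(1)/(1-F_t'(1))\to1/f'(1)$. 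For the parabolic case $a=1$, $f'(1)=0$, I would simply repeat the computation already displayed for part~3), invoking the definition (\ref{eq.generator}) of the generator in the form $t/F_t(0)\to-1/f(0)$ together with the identity $|1-\zeta_n|^2=2(1-\Re\zeta_n)$, valid for $\zeta_n\in\partial\D$, which turns the factor $|1-\zeta_n|^2$ into $2(1-\Re\zeta_n)$ and reproduces the stated inequality.

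The analytic content here is light: every limit is a single use of the Taylor expansion of $\exp$ at $0$, and because the sums are finite the limit commutes with summation and preserves the inequality. The step requiring the most care is thus not the limit itself but the preliminary verification that Theorem~\ref{t.CowePomm82} applies to each $F_t$, that is, that $a$ persists as the Denjoy--Wolff point of every iterate and that the $\zeta_n$ remain boundary regular fixed points governed by the exponential derivative formula; these, however, are exactly the facts assembled in Section~\ref{s.soaf}. A minor secondary point is to note that in every case one multiplies the discrete inequality by the single factor $t$ before letting $t\to0^+$, which cancels the divergent $1/t$ behaviour of the left-hand side while shrinking the right-hand side to the infinitesimal quantity recorded in the statement.
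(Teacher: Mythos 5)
Your proposal is correct and follows exactly the route the paper intends: it carries out for parts 1) and 2) the same passage to the limit $t\to0^+$ in Theorem~\ref{t.CowePomm82} applied to $F_t$ (using $F_t'(\zeta_n)=\exp(-f'(\zeta_n)t)$ and multiplying by $t$) that the paper displays explicitly for the parabolic case and then declares can be repeated ``similarly'' for the remaining cases. Your preliminary verifications that each $F_t$ inherits the Denjoy--Wolff point $a$ and that the $\zeta_n$ are boundary regular fixed points with $F_t'(\zeta_n)>1$ are precisely the facts the paper assembles in Sections~\ref{s.soaf} and~\ref{s.roCPti}, so nothing is missing.
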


This method appears difficult to derive the extremal forms of
generators under which equalities in the assertions 1)-3) hold. In
this paper we develop {\it inter alia} other analytic and
geometric approaches to establish the desirable results.

If
   $\mathcal{S} = \{ F_t \}_{t \geq 0}$
is a semigroup in $H (\D,\D)$ with a generator $f \in H (\D)$ and
                                   a Denjoy-Wolff point $a \in \overline{\D}$,
then $f (a) = 0$ and
     $\Re f' (a) \geq 0$.
Hence Corollary \ref{c.CowePomm82} applies to yield a modification of
   Theorem \ref{t.ContDiazPomm06}
which also contains the dilation case not considered previously.
More precisely, assume that
   $\zeta_1, \ldots, \zeta_N$ are boundary regular null points of $f$
   different from $a$.

a)
If $a = 0$, then
$$
   \sum_{n=1}^N \frac{1}{|f' (\zeta_n)|}
 \leq 2\, \Re \Big( \frac{1}{f' (0)} \Big).
$$

b)
If $a = 1$, then
$$
   \sum_{n=1}^N \frac{1 - \Re \zeta_n}{|f' (\zeta_n)|}
 \leq - \Re \Big( \frac{1}{f (0)} \Big).
$$
Moreover, equality in a) or
                      b)
holds if and only if $f$ is of the form
\begin{equation}
\label{eq.extremal}
\begin{array}{rcl}
   f (z) & = & \displaystyle z\, \frac{1 - F (z)}{1 + F (z)},
\\
   f (z) & = & \displaystyle - (1-z)^2\, \frac{1 - F (z)}{1 + F (z)},
\end{array}
\end{equation}
respectively, $F$ being a Blaschke product of order $N$.
Indeed, it follows from the Berkson-Porta formula that $f$ can be represented
by
   the first formula of (\ref{eq.extremal}) in the case a)
and by
   the second formula of (\ref{eq.extremal}) in the case b),
where
   $F$ is a holomorphic self-mapping of $\D$.
Since $f' (\zeta_n)$ exists and is finite, we immediately conclude
that
   $F (\zeta_n) = 1$
for all $n = 1, \ldots, N$.
Then our assertion follows from (\ref{eq.CowePomm82}) by simple computation.

Observe that since each element of a continuous semigroup
   $S = \left\{ F_t \right\}_{t \geq 0}$
is a univalent function on $\mathbb{D}$, one can invoke certain results of
Cowen-Pommerenke type, to get appropriate estimates for generators.
For example, in the dilation case (i.e. $a = 0$) a recent result for univalent
functions given in Corollary 4.1 of \cite{AV08} leads to the inequality
$$
   \sum_{n=1}^N \frac1{|f'(\zeta_n)|}
 \leq
   \frac{2}{\Re f'(0)}
$$
for holomorphic generators, which is obviously weaker than the one
given in the assertion a) above. The point is that univalent
self-mappings of $\mathbb{D}$ may not be embedded in general into
a continuous semigroup.

Note also that the assertion b) does not distinguish between hyperbolic and
parabolic cases.
Indeed, (\ref{eq.CowePomm82}) applies
   neither to inequality 2) of Corollary \ref{c.CowePomm82}
   nor to its extremal function which fulfills the equality.
Therefore, in the hyperbolic case the natural question arises of whether the
extremal function
   which satisfies equality in 2) of Corollary \ref{c.CowePomm82}
is actually the same as the second function of (\ref{eq.extremal})
   fulfilling equality in the assertion b),
provided $f' (1) > 0$.

In the next section we present a decomposition theorem for generators with
boundary Denjoy-Wolff points.
It enables us to recover the inequality 2) of Corollary~\ref{c.CowePomm82} as
well as some quantitative algebraic and geometric characteristics related to
this case.
This theorem may be thought of as separation of singularities for such
generators.
A general result on separation of singularities is given in
Section~\ref{s.separation}, which also includes the dilation case.
Yet another look at the problem in question leads to an infinitesimal version
of boundary interpolation theorem for holomorphic generators a la Pick and
Nevanlinna.
Roughly speaking, this problem consists in the following.
Given
   points $\zeta_1, \ldots, \zeta_{N+1}$ on $\partial \mathbb{D}$
and
   real numbers $m_1, \ldots, m_{N+1}$,
find all holomorphic generators satisfying
   $f (\zeta_n) = 0$ and
   $f' (\zeta_n) = m_n$ for $n = 1, \ldots, N+1$.
In Section~\ref{s.distortion} we also give distortion theorems for generators
of all types which automatically include
   inequalities 1)--3) of Corollary \ref{c.CowePomm82} and
   extremal functions to be given in Sections \ref{s.bDWp} and
                                              \ref{s.separation}.

\section{Boundary Denjoy-Wolff points}
\label{s.bDWp}

Let $g$ be a generator of a group of hyperbolic automorphisms of $\D$ having
   attracting (Denjoy-Wolff) fixed point $a = 1$
and
   a repelling fixed point $\zeta \in \partial \D$.
From the part 3) of Theorem \ref{t.generator} it follows that
$$
   g (z)
 = c\, \frac{(z-1) (z-\zeta)}{1-\zeta}
$$
for some $c > 0$, and so
$
   \displaystyle
   \frac{1}{g (z)}
 = \frac{1}{c} \Big( \frac{1}{z-1} - \frac{1}{z-\zeta} \Big).
$

The following theorem shows that for any generator $f$ with boundary regular
null points
   $\zeta_1, \ldots, \zeta_N$, either
$$
   \frac{1}{f (z)}
 = \sum_{n=1}^{N} \frac{1}{g_n (z)},
$$
where
   $g_n$ is a group generator with the Denjoy-Wolff point $a = 1$ and the
   repelling point $\zeta = \zeta_n$,
or
   there is a generator $h$ of the same (hyperbolic or parabolic) type as $f$,
   such that $\zeta_1, \ldots, \zeta_N$ are not regular null points of $h$ and
\begin{equation}
\label{eq.preliminary}
   \frac{1}{f (z)} - \frac{1}{h (z)}
 = \sum_{n=1}^{N} \frac{1}{g_n (z)}.
\end{equation}

\begin{theorem}
\label{t.bDWp}
Let
   $f \in H (\D)$ be the generator of a semigroup with the Denjoy-Wolff point
   $a = 1$.
Assume that $f$ has boundary regular null points $\zeta_1, \ldots, \zeta_N$
different from $1$
   (each $f' (\zeta_n)$ being negative).
Then there exists a number $r \geq 0$, such that
\begin{equation}
\label{eq.bDWp}
   \frac{1}{f (z)}
 = \sum_{n=1}^{N}
   \frac{1}{|f' (\zeta_n)|}
   \Big( \frac{1}{z-1} - \frac{1}{z-\zeta_n} \Big)
 + \frac{r}{h (z)},
\end{equation}
where $h$ is a holomorphic generator on $\D$ satisfying $h' (1) = f' (1)$ and
$$
   \angle \lim_{z \to \zeta_n} \frac{h (z)}{z-\zeta_n} = \infty
$$
for all $n = 1, \ldots, N$.
\end{theorem}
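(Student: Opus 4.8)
\section*{Proof proposal}

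The plan is to reduce the assertion to a statement about functions with nonnegative real part and their Herglotz representation. By the Berkson--Porta representation (part 2 of Theorem~\ref{t.generator}), since $a=1$ one may write $f(z)=-(1-z)^2 p(z)$ with $p\in H(\D)$ and $\Re p\ge 0$. For nontrivial $f$ the function $p$ does not vanish in $\D$ (otherwise the minimum principle for the harmonic function $\Re p\ge 0$ would force $p\equiv 0$), so
$$
   q(z):=\frac{1}{p(z)}=-\frac{(1-z)^2}{f(z)}
$$
is holomorphic in $\D$ and satisfies $\Re q\ge 0$. Multiplying the claimed identity (\ref{eq.bDWp}) by $-(1-z)^2$ and using the elementary identity $-(1-z)^2\big(\frac{1}{z-1}-\frac{1}{z-\zeta_n}\big)=\frac{(1-z)(1-\zeta_n)}{z-\zeta_n}$, the theorem becomes the statement that
$$
   Q(z):=q(z)-\sum_{n=1}^N\frac{1}{|f'(\zeta_n)|}\,\frac{(1-z)(1-\zeta_n)}{z-\zeta_n}
$$
has nonnegative real part in $\D$, together with the identification of $r$ and $h$. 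Indeed, once $\Re Q\ge 0$ the function $-(1-z)^2/Q$ is again of Berkson--Porta form, so $h:=-r(1-z)^2/Q$ is a generator for every $r\ge 0$.

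The boundary structure of $q$ is encoded in its Herglotz representation
$$
   q(z)=i\beta+\int_{\partial\D}\frac{\xi+z}{\xi-z}\,d\mu(\xi),\qquad \beta\in\mathbb{R},\ \mu\ge 0.
$$
The heart of the matter is to show that the hypothesis that $\zeta_n$ is a boundary regular null point of $f$ with $f'(\zeta_n)<0$ forces $\mu$ to carry an atom at $\zeta_n$ of mass exactly $m_n=(1-\Re\zeta_n)/|f'(\zeta_n)|$. This is the step I expect to be the main obstacle, and it is precisely Julia--Carath\'eodory theory: using the formula $\mu(\{\zeta\})=\tfrac12\lim_{r\to1^-}(1-r)\,\Re q(r\zeta)$ together with the angular expansion $q(z)\sim -(1-\zeta_n)^2/\big(f'(\zeta_n)(z-\zeta_n)\big)$ coming from $f(\zeta_n)=0$ and the finiteness of $f'(\zeta_n)$, and the identity $(1-\zeta_n)^2/\zeta_n=-2(1-\Re\zeta_n)$, one obtains the mass $m_n$. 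Its positivity is automatic, and the existence of a genuine atom (rather than a milder singularity) is exactly what finiteness of the angular derivative guarantees.

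I would then rewrite each subtracted term in Herglotz form. Matching the single pole at $\zeta_n$ and one further value (say at $z=1$) gives
$$
   \frac{1}{|f'(\zeta_n)|}\,\frac{(1-z)(1-\zeta_n)}{z-\zeta_n}=m_n\,\frac{\zeta_n+z}{\zeta_n-z}+i\gamma_n,\qquad \gamma_n\in\mathbb{R},
$$
the constant being purely imaginary because $(\zeta_n+1)/(\zeta_n-1)$ is. Splitting $\mu=\sum_n m_n\delta_{\zeta_n}+\nu$ with $\nu\ge 0$ (legitimate since $\mu(\{\zeta_n\})=m_n$), the atoms cancel and
$$
   Q(z)=i\Big(\beta-\sum_{n=1}^N\gamma_n\Big)+\int_{\partial\D}\frac{\xi+z}{\xi-z}\,d\nu(\xi),
$$
so that $\Re Q(z)=\int_{\partial\D}\frac{1-|z|^2}{|\xi-z|^2}\,d\nu(\xi)\ge 0$. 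This establishes the key positivity and simultaneously shows that $Q$ has finite (removable) angular limits at each $\zeta_n$.

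Finally I would fix $r$ and check the properties of $h:=-r(1-z)^2/Q$. Writing $h(z)/(z-1)=r(1-z)/Q(z)$ and using the expansion $Q(z)=(1-z)\big(\frac{1}{f'(1)}-\sum_n\frac{1}{|f'(\zeta_n)|}\big)+o(1-z)$ near $z=1$ in the hyperbolic case (where $q(z)\sim(1-z)/f'(1)$), the normalisation $h'(1)=f'(1)$ forces
$$
   r=1-f'(1)\sum_{n=1}^N\frac{1}{|f'(\zeta_n)|};
$$
letting $z\to1$ radially in $\Re Q\ge 0$ shows the bracket is nonnegative, hence $r\ge 0$ (this incidentally recovers inequality 2) of Corollary~\ref{c.CowePomm82}). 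In the parabolic case $f'(1)=0$ one has $Q(1)\neq 0$, whence $h'(1)=0=f'(1)$ for the choice $r=1$. Since $Q$ has a finite nonzero angular limit at each $\zeta_n$, the quotient $h(z)/(z-\zeta_n)=-r(1-z)^2/\big((z-\zeta_n)Q(z)\big)\to\infty$, i.e. $\angle\lim_{z\to\zeta_n}h(z)/(z-\zeta_n)=\infty$, so $\zeta_n$ is not a regular null point of $h$; and $h$, being of Berkson--Porta form $-(1-z)^2(r/Q)$ with $\Re(r/Q)\ge 0$, is a generator of the same type as $f$. When $r=0$ the remainder is absent and $1/f=\sum_n 1/g_n$ exactly.
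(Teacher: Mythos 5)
Your proposal is correct in substance and arrives at the same decomposition as the paper, but by a different mechanism. The paper proves a bespoke half-plane version of the Julia--Wolff--Carath\'eodory theorem (Lemma~\ref{l.JWC}): for $g=-(1-z)^2/f$ with $\Re g\ge 0$ the limit $\ell=\angle\lim\frac12(1-\bar\zeta z)g(z)$ exists and $\Re g(z)\ge \ell\,(1-|z|^2)/|z-\zeta|^2$; it then subtracts the singular term for \emph{one} point at a time and closes by induction on $N$, finally invoking Corollary~\ref{c.CowePomm82} to get $r\ge 0$ and rescaling $H$ so that $h=rH$ satisfies $h'(1)=f'(1)$. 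You instead pass to the Herglotz measure $\mu$ of $q=1/p$, compute $\mu(\{\zeta_n\})=m_n=(1-\Re\zeta_n)/|f'(\zeta_n)|$, and peel off all $N$ atoms simultaneously. This is the same underlying fact --- the inequality in Lemma~\ref{l.JWC} is exactly the statement $\mu\ge\ell\,\delta_\zeta$ --- but your packaging avoids the inductive bookkeeping (verifying after each subtraction that the remaining $\zeta_j$ are still regular null points with unchanged derivatives) and makes $r\ge 0$ self-contained: the nonnegativity of $\frac1{f'(1)}-\sum_n\frac1{|f'(\zeta_n)|}$ drops out of $\Re Q\ge 0$ along the radius, rather than being imported from Corollary~\ref{c.CowePomm82}. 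The algebra checks out: $-(1-z)^2\bigl(\frac1{z-1}-\frac1{z-\zeta_n}\bigr)=\frac{(1-z)(1-\zeta_n)}{z-\zeta_n}$, the residue matching via $(1-\zeta_n)^2/\zeta_n=-2(1-\Re\zeta_n)$ gives the correct atom mass, and the leftover constant is purely imaginary because $(\zeta_n+1)/(\zeta_n-1)\in i\mathbb{R}$.

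One claim needs repair, although it does not sink the argument. After removing the atoms it is \emph{not} true in general that $Q$ has a finite angular limit at $\zeta_n$: a Herglotz function whose measure has no atom at $\zeta_n$ can still be unbounded there. What is true, and all you actually need, is the weaker statement $\angle\lim\,(1-\bar\zeta_n z)Q(z)=2\nu(\{\zeta_n\})=0$, i.e.\ $(z-\zeta_n)Q(z)\to 0$ angularly; this already yields $h(z)/(z-\zeta_n)=-r(1-z)^2/\bigl((z-\zeta_n)Q(z)\bigr)\to\infty$. Similarly, in the parabolic case the assertion ``$Q(1)\neq 0$'' is neither needed nor always true; what matters is $Q(z)/(1-z)\to\infty$, which holds because $q(z)/(1-z)=(z-1)/f(z)\to\infty$ while the subtracted sum is $O(1-z)$.
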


\begin{proof}
This theorem follows by induction from its reduced form corresponding to
$N = 1$.
More precisely, let
   $f \in H (\D)$ be the generator of a semigroup with the Denjoy-Wolff point
   $a = 1$ and a boundary regular fixed point $\zeta \neq 1$
(so $f' (\zeta) < 0$).
Suppose that
$$
   \frac{1}{f (z)}
 \neq
   \frac{1}{|f' (\zeta)|}
   \Big( \frac{1}{z-1} - \frac{1}{z-\zeta} \Big).
$$
Then the function $h$ defined by
\begin{equation}
\label{eq.h(z)}
   \frac{1}{h (z)}
 = \frac{1}{f (z)}
 - \frac{1}{|f' (\zeta)|} \Big( \frac{1}{z-1} - \frac{1}{z-\zeta} \Big)
\end{equation}
generates a semigroup with the same Denjoy-Wolff point $a = 1$ and of the same
(hyperbolic or parabolic) type as $f$, and
$$
   \angle \lim_{z \to \zeta} \frac{h (z)}{z-\zeta} = \infty.
$$

To prove this assertion we need a modified version of the
   Julia-Wolff-Carath\'{e}odo\-ry theorem.

\begin{lemma}
\label{l.JWC}
Let
   $g$ be a holomorphic function in $\D$ with nonnegative 
real part.
Then, for each $\zeta \in \partial \D$, the limit
$$
   \angle \lim_{z \to \zeta} \frac{1}{2}\, (1 - \bar{\zeta} z) g (z)
 = \ell
$$
exists and is a nonnegative real number. Moreover, $
   \displaystyle
   \Re g (z) \geq \ell\, \frac{1-|z|^2}{|z-\zeta|^2}.
$
\end{lemma}

\begin{proof}
Consider the holomorphic function $G$ defined by
$$
   G (w) = g \Big( \zeta\, \frac{w-1}{w+1} \Big),
$$
or, equivalently,
$
   \displaystyle
   g (z) = G \Big( \frac{1 + \bar{\zeta} z}{1 - \bar{\zeta} z} \Big).
$
Then $G$ is a self-mapping of the right half-plane $\{ \Re w \geq 0 \}$ and
\begin{eqnarray*}
   \angle \lim_{w \to \infty} \frac{G (w)}{w+1}
 & = &
   \angle \lim_{w \to \infty}
   \frac{\displaystyle g \Big( \zeta\, \frac{w-1}{w+1} \Big)}
        {w+1}
\\
 & = &
   \angle \lim_{z \to \zeta}
   \frac{g (z)}
        {\displaystyle \frac{1 + \bar{\zeta} z}{1 - \bar{\zeta} z} + 1}
\\
 & = &
   \angle \lim_{z \to \zeta} \frac{1}{2}\, (1 - \bar{\zeta} z) g (z)
\\
 & = &
   \ell.
\end{eqnarray*}
By the Julia-Wolff-Carath\'{e}odory theorem,
$\ell$ is a nonnegative real number and $\Re G (w) \geq \ell\, \Re w$, or
$$
   \Re g (z) \geq \ell\, \frac{1-|z|^2}{|z-\zeta|^2},
$$
as desired.
\end{proof}

We proceed to prove Theorem \ref{t.bDWp}.
Consider the holomorphic function
$$
   g (z) = - \frac{(z-1)^2}{f (z)}
$$
in $\D$.
By the Berkson-Porta representation, $\Re g (z) \geq 0$ for all $z \in \D$,
   cf. Theorem \ref{t.generator}.
Furthermore,
$$
   \angle \lim_{z \to \zeta} (1 - \bar{\zeta} z) g (z)
 =
   2\, \frac{1 - \Re \zeta}{|f' (\zeta)|},
$$
that is, the function $g$ satisfies the assumptions of Lemma~\ref{l.JWC} with
$
   \displaystyle
   \ell = \frac{1 - \Re \zeta}{|f' (\zeta)|}.
$
Therefore,
\begin{equation}
\label{eq.rpg(z)}
   \Re g (z)
 \geq \frac{1 - \Re \zeta}{|f' (\zeta)|}\, \frac{1-|z|^2}{|z-\zeta|^2}.
\end{equation}

On the other hand, using (\ref{eq.h(z)}) yields
\begin{eqnarray*}
   \Re g (z)
 & = &
   - \Re \Big(
   \frac{(z-1)^2}{h (z)}
 + \frac{(z-1)^2}{|f' (\zeta)|} \Big( \frac{1}{z-1} - \frac{1}{z-\zeta} \Big)
         \Big)
\\
 & = &
   - \Re \Big( \frac{(z-1)^2}{h (z)} \Big)
   - \Re \Big( \frac{z-1}{|f' (\zeta)|}\, \frac{1-\zeta}{z-\zeta} \Big)
\\
 & = &
   - \Re \Big( \frac{(z-1)^2}{h (z)} \Big)
   + \frac{1 - \Re \zeta}{|f' (\zeta)|}\, \frac{1-|z|^2}{|z-\zeta|^2}.
\end{eqnarray*}
Comparing this expression with (\ref{eq.rpg(z)}) we conclude that
$$
 - \Re \Big( \frac{(z-1)^2}{h (z)} \Big) \geq 0,
$$
and so the Berkson-Porta formula shows that $h$ is a semigroup generator on
$\D$.
By (\ref{eq.h(z)}),
$$
   \angle \lim_{z \to 1} \frac{z-1}{h (z)}
 = \angle \lim_{z \to 1} \frac{z-1}{f (z)} - \frac{1}{|f' (\zeta)|}.
$$
Hence, the angular derivative of $h$ at $a = 1$ is different from zero if and
only if the angular derivative of $f$ is.

This proves the desired reduced form of Theorem \ref{t.bDWp}, implying that
in our situation either
$$
   \frac{1}{f (z)}
 = \sum_{n=1}^{N}
   \frac{1}{|f' (\zeta_n)|}
   \Big( \frac{1}{z-1} - \frac{1}{z-\zeta_n} \Big),
$$
i.e.,
   (\ref{eq.bDWp}) holds with $r = 0$ and arbitrary $h$,
or
$$
   \frac{1}{f (z)}
 = \sum_{n=1}^{N}
   \frac{1}{|f' (\zeta_n)|}
   \Big( \frac{1}{z-1} - \frac{1}{z-\zeta_n} \Big)
 + \frac{1}{H (z)}
$$
for some semigroup generator $H$.
It follows from Corollary \ref{c.CowePomm82} that
$$
   r = 1 - f' (1) \sum_{n=1}^{N} \frac{1}{|f' (\zeta_n)|}
     \geq 0.
$$
Since the set of all generators is a real cone, one can set
   $h (z) = r\, H (z)$
to get (\ref{eq.bDWp}) with $h' (1) = f' (1)$.
\end{proof}

As but one consequence we recover Theorem 2 of \cite{ContDiazPomm06} with the
extremal function given explicitly, cf.
   Theorem \ref{t.ContDiazPomm06}
and
   comments after Corollary \ref{c.CowePomm82}.

\begin{corollary}
\label{c.bDWp}
Let
   $f \in H (\D)$ be the generator of a semigroup with the Denjoy-Wolff point
   $a = 1$,
and let
   $\zeta_1, \ldots, \zeta_N$ be boundary regular null points of $f$
   different from $a$,
i.e.
   $f (\zeta_n) = 0$ and
   $f'(\zeta_n) < 0$.
Then
$$
   \sum_{n=1}^N \frac{1 - \Re \zeta_n}{|f' (\zeta_n)|}
 \leq - \Re \Big( \frac{1}{f (0)} \Big)
$$
and equality holds if and only if
$$
   \frac{1}{f (z)}
 = \sum_{n=1}^{N}
   \frac{1}{|f' (\zeta_n)|}
   \Big( \frac{1}{z-1} - \frac{1}{z-\zeta_n} \Big)
 + \frac{\imath c}{(z-1)^2}
$$
for some $c \in \mathbb{R}$.
\end{corollary}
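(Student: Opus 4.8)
The plan is to extract both the inequality and its equality case directly from the decomposition (\ref{eq.bDWp}) of Theorem \ref{t.bDWp}. First I would invoke that theorem to produce a number $r \geq 0$ together with a holomorphic generator $h$ on $\D$ satisfying
$$
   \frac{1}{f (z)}
 = \sum_{n=1}^{N}
   \frac{1}{|f' (\zeta_n)|}
   \Big( \frac{1}{z-1} - \frac{1}{z-\zeta_n} \Big)
 + \frac{r}{h (z)} .
$$
Since $h$ is a generator with Denjoy-Wolff point $a = 1$, the Berkson-Porta representation in part 2) of Theorem \ref{t.generator} allows me to write $h (z) = - (z-1)^2 p (z)$ with $p \in H (\D)$ and $\Re p \geq 0$. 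In particular $h (0) = - p (0)$, so that $1/h (0) = - 1/p (0)$ and $\Re (1/h (0)) = - \Re p (0) / |p (0)|^2 \leq 0$.

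Next I would evaluate the displayed identity at $z = 0$ and take real parts. Using $|\zeta_n| = 1$, hence $1/\zeta_n = \bar{\zeta}_n$, one finds $\frac{1}{0-1} - \frac{1}{0-\zeta_n} = - 1 + \bar{\zeta}_n$, whose real part equals $- (1 - \Re \zeta_n)$. Consequently
$$
   \Re \frac{1}{f (0)}
 = - \sum_{n=1}^{N} \frac{1 - \Re \zeta_n}{|f' (\zeta_n)|}
 + r\, \Re \frac{1}{h (0)} .
$$
Since $r \geq 0$ and $\Re (1/h (0)) \leq 0$, the last summand is nonpositive, which gives the asserted inequality at once.

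The equality case is where the real work lies, and I expect it to be the main obstacle. Equality holds exactly when $r\, \Re (1/h (0)) = 0$, that is, when either $r = 0$ or $\Re (1/h (0)) = 0$. If $r = 0$ the decomposition is already the claimed identity with $c = 0$. If $r > 0$, then $\Re (1/h (0)) = 0$ forces $\Re p (0) = 0$; since $\Re p$ is a nonnegative harmonic function attaining its minimum value $0$ at the interior point $z = 0$, the minimum principle yields $\Re p \equiv 0$, so $p$ reduces to a purely imaginary constant $\imath \beta$ with $\beta \in \mathbb{R}$. Then $r / h (z) = - r / (\imath \beta (z-1)^2) = \imath c / (z-1)^2$ with $c = r / \beta$, which is precisely the extremal form. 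The delicate point here is to pass from the vanishing of $\Re p$ at the single point $0$ to its identical vanishing, for which the minimum (equivalently maximum) principle for harmonic functions is indispensable; one must also note $p (0) \neq 0$ in the nondegenerate case so that $1/h (0)$ is meaningful.

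Finally, the converse is a direct substitution: inserting the displayed extremal expression for $1/f$ and evaluating at $z = 0$ recovers equality, because the term $\imath c / (z-1)^2$ contributes $\imath c$ there and hence nothing to the real part; that the resulting $f$ is genuinely a generator satisfying the hypotheses follows by reading $h (z) = - \imath \beta (z-1)^2$ through part 3) of Theorem \ref{t.generator}.
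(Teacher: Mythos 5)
Your proposal is correct and follows essentially the same route as the paper: invoke Theorem \ref{t.bDWp}, write the remainder via the Berkson--Porta form $h(z) = -(z-1)^2 p(z)$ with $\Re p \geq 0$, evaluate at $z = 0$, and settle the equality case with the maximum (minimum) principle forcing $p$ to be a purely imaginary constant. The only cosmetic difference is that the paper absorbs $r$ into $p$ at the outset (writing $r/h(z) = -p(z)/(z-1)^2$ directly), which sidesteps the small care you rightly take about $p(0) \neq 0$ when dividing by $h(0)$.
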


If $c = 0$ then $f$ is of hyperbolic type.
Otherwise, if $c \neq 0$, then $f$ is of parabolic type.

\begin{proof}
By Theorem \ref{t.bDWp}, there exists a holomorphic function $p$ in $\D$ with
nonnegative real part, such that
$$
   \frac{1}{f (z)}
 = \sum_{n=1}^{N}
   \frac{1}{|f' (\zeta_n)|}
   \Big( \frac{1}{z-1} - \frac{1}{z-\zeta_n} \Big)
 - \frac{p (z)}{(z-1)^2},
$$
whence
$$
 - \frac{1}{f (0)}
 = \sum_{n=1}^{N}
   \frac{1 - \bar{\zeta}_n}{|f' (\zeta_n)|}
 + p (0).
$$
If $\Re p (0) > 0$, then strict inequality in Corollary \ref{c.bDWp} holds.
Otherwise, we get
   $p (z) = - \imath c$
with a real constant $c$, which is due to the Maximum Principle.
\end{proof}

Regarding hyperbolic type generators, we are now in a position to complete the
part 2) of Corollary \ref{c.CowePomm82}.

\begin{theorem}
\label{t.completionCowePomm82}
Let $f$ be the generator of a hyperbolic type semigroup with the Denjoy-Wolff
point $a = 1$, i.e.
   $f (1) = 0$ and
   $m := f' (1) > 0$.
Suppose
   $\zeta_1, \ldots, \zeta_N$ are boundary regular null points of $f$ different
   from $a$.
Then
$$
   \frac{1}{m}
 \geq
   \sum_{n=1}^N \frac{1}{|m_n|},
$$
where $m_n = f' (\zeta_n)$.
Moreover, equality holds if and only if
\begin{equation}
\label{eq.ef}
   \frac{1}{f (z)}
 = \sum_{n=1}^{N}
   \frac{1}{|m_n|}
   \Big( \frac{1}{z-1} - \frac{1}{z-\zeta_n} \Big).
\end{equation}
\end{theorem}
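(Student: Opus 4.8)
The plan is to read off both the inequality and the equality case directly from the decomposition already established in Theorem~\ref{t.bDWp}, exploiting the fact that in the hyperbolic case $m = f'(1) > 0$ renders the angular derivative of the remainder term genuinely finite and nonzero. First I would invoke Theorem~\ref{t.bDWp} to obtain a number $r \geq 0$ and a holomorphic generator $h$ with $h'(1) = f'(1) = m$ and $\angle \lim_{z \to \zeta_n} h(z)/(z-\zeta_n) = \infty$, such that
$$
   \frac{1}{f (z)}
 = \sum_{n=1}^{N}
   \frac{1}{|m_n|}
   \Big( \frac{1}{z-1} - \frac{1}{z-\zeta_n} \Big)
 + \frac{r}{h (z)}.
$$

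Next I would extract the coefficient $r$ by a single angular-limit computation at the Denjoy-Wolff point. Multiplying the displayed identity by $(z-1)$ and letting $z \to 1$ angularly, each quotient $(z-1)/(z-\zeta_n)$ tends to $0$ because $\zeta_n \neq 1$, while $\angle \lim_{z \to 1} (z-1)/f (z) = 1/m$ and $\angle \lim_{z \to 1} (z-1)/h (z) = 1/h'(1) = 1/m$. This yields
$$
   \frac{1}{m}
 = \sum_{n=1}^{N} \frac{1}{|m_n|} + \frac{r}{m},
$$
equivalently $r = 1 - m \sum_{n=1}^{N} 1/|m_n|$, in agreement with the value of $r$ recorded in the proof of Theorem~\ref{t.bDWp}. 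Here the hyperbolicity is essential: were $m = 0$, the angular derivative of $h$ at $1$ would be infinite and this clean bookkeeping would collapse, which is exactly why the parabolic case must be treated separately.

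Since Theorem~\ref{t.bDWp} guarantees $r \geq 0$, the identity $r = 1 - m \sum 1/|m_n|$ gives at once $\sum_{n=1}^{N} 1/|m_n| \leq 1/m$, the asserted estimate. For the equality case I would observe that equality forces $r = 0$, so that the remainder term $r/h(z)$ vanishes identically and the decomposition reduces to the extremal form~(\ref{eq.ef}); conversely, if $f$ is given by~(\ref{eq.ef}), the same angular-limit computation, now with no remainder, returns $1/f'(1) = \sum 1/|m_n|$, i.e.\ equality. I do not expect a serious obstacle, because the substantive analytic work---verifying that the remainder $1/h$ is again a generator of the correct type and establishing $r \geq 0$---has already been carried out in Theorem~\ref{t.bDWp}; what remains is essentially the explicit evaluation of $r$ and the recognition that its nonnegativity \emph{is} the desired inequality, with its vanishing characterising the extremal generators.
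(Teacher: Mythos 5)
Your argument is correct, but it is not the route the paper takes. The paper proves Theorem \ref{t.completionCowePomm82} by passing to the linearizing function $\sigma$ solving $f\sigma'=m\sigma$, which is univalent and starlike with respect to the boundary point $0=\sigma(1)$; the inequality comes from counting angles, namely that the disjoint corners $C_n$ of opening $|Q_{\zeta_n}|\pi=(m/|m_n|)\pi$ attached to the points $\zeta_n$ must all fit inside the half-plane containing $\sigma(\D)$, and the equality case is extracted from the integral representation of such starlike functions (Lemma 3 of \cite{ElinShoiZalc08}), which in the extremal situation collapses to $\sigma(z)=(1-z)\prod(1-\bar\zeta_nz)^{Q_{\zeta_n}}$ and hence to (\ref{eq.ef}). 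You instead read both statements off the decomposition of Theorem \ref{t.bDWp}, pinning down $r=1-m\sum 1/|m_n|$ by the angular limit at $z=1$ (your bookkeeping there is right: each bracket contributes $1/|m_n|$, not $0$, after multiplication by $z-1$, and your final identity is the correct one). This is legitimate and in fact mirrors how the paper itself uses $r$ later, in the proof of Theorem \ref{t.distortion}. Two comparative remarks. First, be aware that the inequality half of your argument is not an independent proof: in the paper, the assertion $r\geq 0$ inside Theorem \ref{t.bDWp} is itself justified by citing part 2) of Corollary \ref{c.CowePomm82}, which is exactly the inequality $\sum 1/|m_n|\leq 1/m$; so your derivation unwinds to a restatement of that corollary (logically sound, since the corollary is established via the Cowen--Pommerenke theorem, but it adds nothing new there), whereas the paper's geometric proof is genuinely independent of Cowen--Pommerenke. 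Second, the real content of the theorem is the equality characterization, and there your route is clean and arguably simpler than the paper's: $r$ is uniquely determined by the normalization $h'(1)=f'(1)=m>0$ (this is where hyperbolicity enters, as you note), so equality forces $r=0$ and hence (\ref{eq.ef}), and the converse is the same limit computation. What you lose relative to the paper is the geometric by-product that in the extremal case $\sigma(\D)$ is a half-plane with $N$ slits.
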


Another way of stating the extremality condition is to state that $1/f (z)$ is
a linear combination of
   $1/g_1 (z), \ldots, 1/g_N (z)$,
where
   $g_n (z)$ is the generator of the group of hyperbolic automorphisms having
   boundary fixed points $a = 1$ and $\zeta_n$, with $g_n' (a) = 1$ and
                                                     $g_n' (\zeta_n) = -1$.
The coefficients of the linear combination are $1/|m_n|$ and their sum is
                                               $1/m$.

\begin{proof}
Consider the holomorphic function $\sigma$ on $\D$ defined by the differential
equation
\begin{equation}
\label{eq.differential}
   f (z) \sigma' (z) = m\, \sigma (z)
\end{equation}
under the conditions $\sigma (1) = 0$ and
                     $\sigma (0) = 1$.

It was shown in
   \cite{ElinReicShoi01}
(see also \cite{Shoi01}) that $\sigma (z)$ is a univalent function starlike
with respect to the boundary point $0 = \sigma (1)$, and the formula
$$
   F_t (z) := \sigma^{-1} \Big( \exp (- f' (1) t)\, \sigma (z) \Big)
$$
for $t \geq 0$ reproduces the semigroup generated by $f$.
Furthermore,
   by \cite{ElinShoi06},
the smallest corner $C$ containing $\sigma (\D)$ is of angle $Q_a \pi$, where
   $Q_a$
is the limit of the Visser-Ostrowski quotient at the point $a = 1$.
Namely,
$$
   Q_a := \angle \lim_{z \to a} (z-a)\, \frac{\sigma' (z)}{\sigma(z)}.
$$
Since, by (\ref{eq.differential}),
$
   \displaystyle
   Q_a = \frac{m}{f' (1)} = 1,
$
we readily conclude that $\sigma (\D)$ is contained in a half-plane.

On the other hand, for every $n = 1, \ldots, N$, the image $\sigma (\D)$
contains a corner $C_n$ of angle $|Q_{\zeta_n}| \pi$ with
\begin{eqnarray*}
   Q_{\zeta_n}
 & := &
   \angle \lim_{z \to \zeta_n} (z-\zeta_n)\, \frac{\sigma' (z)}{\sigma (z)}
\\
 & = &
   \frac{m}{m_n},
\end{eqnarray*}
and different corners $C_n$ and
                      $C_m$
do not meet each other.
Since the union of $C_1, \ldots, C_N$ belongs to $C$, it follows that
\begin{equation}
\label{eq.geometry}
   \sum_{n=1}^{N} \frac{m}{|m_n|} \pi \leq \pi,
\end{equation}
proving the inequality in the theorem.

From Lemma 3 of \cite{ElinShoiZalc08} one sees that each function $\sigma$
starlike with respect to the boundary point $0 = \sigma (1)$ and with image
in a half-plane admits the representation
$$
   \sigma (z)
 = (1-z)
   \exp \Big( - \oint_{|\zeta| = 1} \log (1 - \bar{\zeta} z) d \mu (\zeta)
        \Big)
$$
with $\mu$ being a probability measure on the circle.
Moreover, if there is a boundary point $\zeta_1 \neq 1$, such that
   $Q_{\zeta_1}$ exists and is finite,
then by the same Lemma 3 of \cite{ElinShoiZalc08}
$$
   \sigma (z)
 = (1 - z)
   (1 - \bar{\zeta}_1 z)^{Q_{\zeta_1}}
   \exp \Big( - (1 + Q_{\zeta_1})
              \oint_{|\zeta| = 1} \log (1 - \bar{\zeta} z) d \mu_1 (\zeta)
        \Big),
$$
where $\mu_1$ is a probability measure on $\partial \D$. Iterating
this procedure $N$ times we arrive at the representation
$$
   \sigma (z)
 = (1-z)
   \prod_{n=1}^{N} (1 - \bar{\zeta}_n z)^{Q_{\zeta_n}}\,
   \exp \Big( \displaystyle
              - (1 + \sum_{n=1}^{N} Q_{\zeta_n})
              \oint_{|\zeta| = 1} \log (1 - \bar{\zeta} z) d \mu_N (\zeta)
        \Big)
$$
valid in our situation.

The equality in (\ref{eq.geometry}) means that
$
   \displaystyle
   1 + \sum_{n=1}^{N} Q_{\zeta_n} = 0,
$
and so
$$
   \sigma (z)
 = (1-z) \prod_{n=1}^{N} (1 - \bar{\zeta}_n z)^{Q_{\zeta_n}}
$$
(note in passing that in this case $\sigma(\D)$ is a half-plane with $N$ slits
 corresponding to the points $\zeta_1, \ldots, \zeta_N$).
Consequently,
   by (\ref{eq.differential}),
$f$ must take form (\ref{eq.ef}).
\end{proof}

In the hyperbolic case, Corollary~\ref{c.bDWp} and
                        Theorem~\ref{t.completionCowePomm82}
meet each other which can be formulated in a unified form.
Namely, let
   $f \in H (\D)$
be a generator of hyperbolic type semigroup, with $f (1) = 0$
                                              and $f' (1) < 0$,
and let
   $\zeta_1, \ldots, \zeta_N$
be points on the circle, such that $f (\zeta_n) = 0$ and
                                   $f' (\zeta_n) < 0$
for all $n = 1, \ldots, N$.
Then
\begin{eqnarray*}
   \sum_{n=1}^N \frac{1}{|f' (\zeta_n)|}
 & \leq
 & \frac{1}{f' (1)},
\\
   \sum_{n=1}^N \frac{1 - \Re \zeta_n}{|f' (\zeta_n)|}
 & \leq
 & - \Re \frac{1}{f (0)}
\end{eqnarray*}
and equality in either inequality is achieved if and only if
$$
   \frac{1}{f (z)}
 = \sum_{n=1}^N
   \frac{1}{|f' (\zeta_n)|}
   \Big( \frac{1}{z - 1} - \frac{1}{z - \zeta_n} \Big).
$$

We are now able to show that the extremal functions in
   (\ref{eq.CowePomm82}) and
   Theorem \ref{t.sharp}
are actually the same.

\begin{theorem}
\label{t.same}
Suppose
   $F$ is a holomorphic self-mapping of the unit disk, such that $F (1) = 1$
   and the angular derivative $F' (1)$ is finite.
Let moreover
   $\zeta_1, \ldots, \zeta_N$ be points on $\partial \D$, with
   $F (\zeta_n) = w\in\partial\mathbb{D}$ for some $w \neq 1$ and $F' (\zeta_n)$ being finite for
   all $n = 1, \ldots, N$.
Then
$$ 
   \sum_{n=1}^N \frac{1}{|F' (\zeta_n)|}
 \leq
   \Re \frac{w + F (0)}{w - F (0)}
 \ \ \ \mbox{and}\ \ \
   \sum_{n=1}^N \frac{1}{|F' (\zeta_n)|} \frac{1 - \Re w}{1 - \Re \zeta_n}
 \leq
   F' (1)
$$ 
and equality in either inequality holds if and only if
$
   \displaystyle
   F (z) = \frac{1 + (1 - w) A (z)}{1 - (1 - \bar{w}) A (z)},
$
where
$$
   A (z)
 = \sum_{n=1}^N
   \frac{1}{|F' (\zeta_n)|}
   \frac{(1 - z)^2}{|1 - \zeta_n|^2}
   \Big( \frac{1}{z-1} - \frac{1}{z - \zeta_n} \Big).
$$
\end{theorem}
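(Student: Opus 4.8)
The plan is to transfer the whole statement into the language of semigroup generators, where the two inequalities together with their \emph{common} equality case have already been settled in the unified hyperbolic form obtained from Corollary~\ref{c.bDWp} and Theorem~\ref{t.completionCowePomm82}. The bridge is a carefully normalised Cayley transform acting in the target variable. First I would introduce the M\"obius map
$$
   \Phi (\xi) = (1 - \bar{w})\, \frac{w - \xi}{\xi - 1},
$$
which carries $\D$ onto the right half-plane and satisfies $\Phi (w) = 0$ and $\Phi (1) = \infty$; the factor $1 - \bar{w}$ and the overall sign are chosen precisely so that the eventual back-substitution reproduces the stated extremal form. Setting $p = \Phi \circ F$, the function $p$ has nonnegative real part, so by the Berkson--Porta criterion (Theorem~\ref{t.generator})
$$
   f (z) = -(1-z)^2\, p (z)
         = (1-z)^2 (1 - \bar{w})\, \frac{w - F (z)}{1 - F (z)}
$$
is a holomorphic generator on $\D$. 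Since $F (1) = 1$ with finite $F' (1) > 0$, and $F (\zeta_n) = w$ with finite $F' (\zeta_n)$, a Julia--Wolff--Carath\'{e}odory computation shows that $f$ has Denjoy--Wolff point $a = 1$, is of hyperbolic type, and has boundary regular null points at $\zeta_1, \ldots, \zeta_N$.

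The heart of the argument is the dictionary between the boundary data of $F$ and of $f$, obtained from the chain rule for angular derivatives together with the identities $|1 - \zeta_n|^2 = 2 (1 - \Re \zeta_n)$ and $|1 - \bar{w}| = |1 - w|$:
$$
   f' (1) = \frac{|1-w|^2}{F' (1)},
   \qquad
   |f' (\zeta_n)| = |1 - \zeta_n|^2\, |F' (\zeta_n)|,
   \qquad
 - \Re \frac{1}{f (0)} = \frac{1}{2}\, \Re \frac{w + F (0)}{w - F (0)}.
$$
Substituting these into the two generator inequalities $\sum 1/|f' (\zeta_n)| \leq 1/f' (1)$ and $\sum (1 - \Re \zeta_n)/|f' (\zeta_n)| \leq -\Re (1/f (0))$ turns the first into inequality~2 of the theorem and the second into inequality~1. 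Thus both inequalities of Theorem~\ref{t.same} are \emph{equivalent} to the already-established generator inequalities, which simultaneously proves them and sets up the equality analysis.

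Finally, the common equality case for generators, $1/f (z) = \sum |f' (\zeta_n)|^{-1} \big( 1/(z-1) - 1/(z-\zeta_n) \big)$, is transferred back to $F$ by inverting $\Phi$. A short computation shows this equality amounts to $p = -1/A (z)$ with $A$ exactly as in the statement; solving $\Phi (F) = p$ for $F$ and using $w (1 - \bar{w}) = w - 1$ then yields precisely
$$
   F (z) = \frac{1 + (1 - w) A (z)}{1 - (1 - \bar{w}) A (z)}.
$$
Because equality in either inequality corresponds to one and the same extremal generator, the extremal $F$ for the two inequalities must coincide, which answers the open question. The main obstacle I expect is technical rather than conceptual: rigorously justifying the angular-derivative chain rule and the boundary limit computations that produce the dictionary (most delicately the identity for $-\Re (1/f (0))$), and verifying that the correspondence $F \leftrightarrow f$ is onto the class of hyperbolic generators with the prescribed null points, so that the equality characterisation can be read back in both directions.
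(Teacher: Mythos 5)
Your proposal is correct and follows essentially the same route as the paper: both construct the auxiliary hyperbolic generator $f(z) = -(1-z)^2\,\Phi(F(z))$ (your $f$ differs from the paper's only by the positive factor $|1-w|^2$, harmless since generators form a real cone), compute the same dictionary $f'(1)$, $f'(\zeta_n)$, $f(0)$ via angular limits, invoke the unified hyperbolic equality case combining Corollary~\ref{c.bDWp} and Theorem~\ref{t.completionCowePomm82}, and invert the target M\"obius map to recover the stated extremal $F$. No gaps; your normalisation even makes the back-substitution slightly cleaner than the paper's.
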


\begin{proof}
Consider the function $f$ defined by
$$
   f (z)
 =
 - \frac{(1 - z)^2}{1 - F (z)}\,
   \frac{w - F (z)}{w - 1}
$$
for $z \in \D$.
It follows by the Berkson-Porta formula that $f$ is a holomorphic generator
on $\D$.
Furthermore,
\begin{eqnarray*}
   \angle \lim_{z \to 1} \frac{f (z)}{z - 1}
 & = &
   \angle \lim_{z \to 1} \frac{1 - z}{1 - F (z)}\, \frac{w - F (z)}{w - 1}
\\
 & = &
   \frac{1}{F' (1)}
\\
 & =: &
   m,
\end{eqnarray*}
$m$ being a positive number.
Therefore, $f$ generates a semigroup of hyperbolic type with the Denjoy-Wolff
point $a = 1$.
We also get
$$
   f (0)
 =
 - \frac{1}{1 - F (0)}\,
   \frac{w - F (0)}{w - 1}
$$
and
   $f (\zeta_n) = 0$
for all $n = 1, \ldots, N$, with finite angular derivatives
\begin{eqnarray*}
   f' (\zeta_n)
 & = &
   \angle \lim_{z \to \zeta_n} \frac{f (z)}{z - \zeta_n}
\\
 & = &
   - \frac{(1 - \zeta_n)^2}{(1 - w)^2}\, F' (\zeta_n)
\\
 & =: &
   m_n.
\end{eqnarray*}

Applying the comments after the proof of Theorem \ref{t.completionCowePomm82}
we obtain
\begin{equation}
\label{eq.atc}
   \sum_{n=1}^N
   \frac{(1 - \Re \zeta_n) |1 - w|^2}{|1 - \zeta_n|^2 |F' (\zeta_n)|}
 \leq
   \Re \Big( (1 - F (0))\, \frac{w - 1}{w - F (0)} \Big),
\end{equation}
where equality holds if and only if
\begin{equation}
\label{eq.ehiaoi}
 - \frac{1 - F (z)}{(1 - z)^2}\,
   \frac{w - 1}{w - F (z)}
 =
   \sum_{n=1}^N
   \frac{1}{|m_n|}
   \left( \frac{1}{z-1} - \frac{1}{z-\zeta_n} \right).
\end{equation}
Simplifying (\ref{eq.atc}) yields
\begin{eqnarray*}
   \sum_{n=1}^N
   \frac{1}{2}
   \frac{|1 - w|^2}{|F' (\zeta_n)|}
 & \leq &
   \Re
   \Big(
   (w - 1)\, \frac{(1 - F (0))(\bar{w} - \overline{F (0)})}{|w - F (0)|^2}
   \Big)
\\
 & = &
   \Re \Big( (1 - w)\, \frac{1 - |F (0)|^2}{|w - F (0)|^2} \Big)
\\
 & = &
   (1 - \Re w)\, \Re \frac{w + F (0)}{w - F (0)},
\end{eqnarray*}
which is precisely the first inequality stated in the theorem.

On the other hand, by the same comments,
$$
   \frac{1}{m} \geq \sum_{n=1}^N \frac{1}{|m_n|}
$$
where equality is valid if and only if $F$ satisfies (\ref{eq.ehiaoi}).
This is equivalent to the second inequality of the theorem.

To complete the proof it remains to express the extremal function $F$ from
(\ref{eq.ehiaoi}).
To this end, we denote $R$ for the right-hand side of (\ref{eq.ehiaoi}).
A trivial verification shows that
$$
   F (z)
 = \frac{\displaystyle 1 - \frac{R}{1-w} w (1-z)^2}
        {\displaystyle 1 - \frac{R}{1-w} (1-z)^2}.
$$
Substituting the formulas for $m_n$, we get
$$
   \frac{R}{1-w}
 = (1-\bar{w})
   \sum_{n=1}^N
   \frac{1}{|F' (\zeta_n)|}
   \frac{1}{|1 - \zeta_n|^2}
   \Big( \frac{1}{z-1} - \frac{1}{z - \zeta_n} \Big),
$$
which is the desired formula.
\end{proof}

For the dilation case one cannot assume any representation of the form
   (\ref{eq.preliminary})
where $g_n$ are generators of groups, for there is no group generator having
both interior and boundary null points.
On the other hand, the generators of hyperbolic groups are particular cases
of generators of the form
   $(z-a) (1 - \bar{a} z) R (z)$
where $R (z)$ is an affine-fractional transformation of $\D$ onto
the right half-plane.
This leads us to a generalization of (\ref{eq.bDWp}) even when the
Denjoy-Wolff point $a$ is located inside the disk $\D$.

\section{Separation of singularities in the general case}
\label{s.separation}

Let
   $f (z) = (z-a) (1 - \bar{a} z) p (z)$
with $a \in \overline{\D}$.
Suppose that $f (\zeta) = 0$ and
             $f' (\zeta) < 0$
for some $\zeta \in \partial \D$ different from $a$.
Then
\begin{eqnarray}
\label{eq.f'(zeta)}
   f' (\zeta)
 & = &
   \angle \lim_{z \to \zeta}
   \frac{f (z)}{z-\zeta}
\nonumber
\\
 & = &
 - \angle \lim_{z \to \zeta}
   \frac{(z-a) (1 - \bar{a} z) \bar{\zeta}}
        {\displaystyle (1 - \bar{\zeta} z) \frac{1}{p (z)}}
\nonumber
\\
 & = &
 - \frac{|1 - \bar{\zeta} a|^2}{2 \ell},
\end{eqnarray}
where
$
   \displaystyle
   \ell :=
 \angle \lim_{z \to \zeta} \frac{1}{2}\, (1 - \bar{\zeta} z) \frac{1}{p (z)}.
$
By Lemma \ref{l.JWC},
$$
   \Re \frac{1}{p (z)} \geq \ell\, \frac{1-|z|^2}{|z-\zeta|^2}.
$$

Now pick any $\eta \in \partial \D$ different from $\zeta$, and consider the
function
$$
   R (z)
 = \frac{\eta - z}{(1 - \bar{\zeta} \eta) (z - \zeta)}
$$
for $z \in \D$.
An easy computation shows that $R$ maps $\D$ onto the right half-plane
   $\{ \Re w > 0 \}$
and
$$
   \Re R (z) = \frac{1}{2}\, \frac{1-|z|^2}{|z-\zeta|^2}.
$$
Hence,
$
   \displaystyle
   \Re \frac{1}{p (z)} \geq 2 \ell\, \Re R (z).
$

Let $g \in H (\D)$ be the generator in $\D$ defined by
$
   \displaystyle
   g (z) = \frac{(z-a) (1 - \bar{a} z)}{2 \ell\, R (z)}.
$
Then we get
\begin{eqnarray*}
   \frac{1}{g (z)}
 & = &
   2 \ell\,
   \frac{\eta - z}
        {(1 - \bar{\zeta} \eta) (z - \zeta) (z-a) (1 - \bar{a} z)}
\\
 & = &
   \frac{2 \ell}{1 - \bar{\zeta} \eta}
   \Big( \frac{A}{z - \zeta} -  \frac{B}{z - a} \Big)
\end{eqnarray*}
where
$$
   A (z - a) - B (z - \zeta) = \frac{\eta - z}{1 - \bar{a} z},
$$
or
\begin{equation}
\label{eq.AB}
   (A - B) z - A a + B \zeta = \frac{\eta - z}{1 - \bar{a} z}.
\end{equation}

We distinguish two cases:

1)
If $a = 1$, then $\eta = 1$ and so
   $(A - B) z - A a + B \zeta = 1$.
Hence, $A = B$ with
$
   \displaystyle
   A = \frac{1}{\zeta - 1}
$
and
\begin{eqnarray*}
   \frac{1}{g (z)}
 & = &
   \frac{2 \ell}{(1 - \bar{\zeta}) (\zeta - 1)}
   \Big( \frac{1}{z - \zeta} -  \frac{1}{z - 1} \Big)
\\
 & = &
   \frac{2 \ell}{|1 - \zeta|^2}
   \Big( \frac{1}{z - 1} -  \frac{1}{z - \zeta} \Big)
\\
 & = &
   \frac{1}{|f' (\zeta)|}
   \Big( \frac{1}{z - 1} -  \frac{1}{z - \zeta} \Big),
\end{eqnarray*}
which is due to (\ref{eq.f'(zeta)}).

2)
If $a = 0$, then (\ref{eq.AB}) reduces to
   $(A - B) z + B \zeta = \eta - z$.
Hence,
$$
   \begin{array}{rcl}
     A - B
   & =
   & - 1,
\\
     B \zeta
   & =
   & \eta,
   \end{array}
$$
or
   $B = \bar{\zeta} \eta$ and
   $A = \bar{\zeta} \eta - 1$.
It follows that
\begin{eqnarray*}
   \frac{1}{g (z)}
 & = &
   \frac{2 \ell}{1 - \bar{\zeta} \eta}
   \Big( \frac{\bar{\zeta} \eta - 1}{z - \zeta} - \frac{\bar{\zeta} \eta}{z}
   \Big)
\\
 & = &
   \frac{1}{|f' (\zeta)|}
   \Big( \frac{\bar{\zeta} \eta}{\bar{\zeta} \eta - 1} \frac{1}{z}
       - \frac{1}{z - \zeta}
   \Big).
\end{eqnarray*}

Note that in both the cases $a \in \partial \D$ and
                            $a \in \D$
the function $h$ defined by the equality
$$
   \frac{1}{f (z)} - \frac{1}{g (z)} = \frac{1}{h (z)}
$$
is also a holomorphic generator in $\D$ of the same type as $f$,
   provided $f \neq g$.
Indeed, rewriting this equalitty in the form
$$
   \frac{1}{(z-a) (1 - \bar{a} z) p (z)}
 - \frac{2 \ell\, R (z)}{(z-a) (1 - \bar{a} z)}
 = \frac{1}{(z-a) (1 - \bar{a} z) q (z)}
$$
with
   $h (z) = (z-a) (1 - \bar{a} z) q (z)$,
we deduce from
   $\Re (1/p (z)) \geq 2 \ell\, \Re R (z)$
that
   $\Re (1/q (z)) \geq 0$,
and so
   $\Re q (z) \geq 0$ for all $z \in \D$.
The desired assertion follows from Theorem \ref{t.generator}.

\begin{remark}
\label{r.q(z)}
Except when $f$ is hyperbolic the function $q (z)$ may be constant, i.e.
   $\Re q (z) = 0$.
In this situation $h$ is a generator of a group of either
parabolic or elliptic automorphisms.
\end{remark}

Arguing by induction in much the same way as in the proof of
   Theorem \ref{t.bDWp},
we obtain the following general assertion.

\begin{theorem}
\label{t.general}
Let
   $f \in H (\D)$ be the generator of a semigroup with a Denjoy-Wolff point
   $a \in \overline{\D}$.
If moreover $\zeta_1, \ldots, \zeta_N$ are boundary regular null points of $f$
different from $a$ and satisfying $f' (\zeta_n) < 0$ for all $n = 1,\ldots,N$,
   then there exists a number $r \geq 0$, such that
$$
   \frac{1}{f (z)}
 = \sum_{n=1}^{N}
   \frac{1}{g_n (z)}
 + \frac{r}{h (z)},
$$
where
   $g_n$ are generators of the form $g_n (z) = (z-a) (1 - \bar{a} z) R_n (z)$
   with $R_n$ being affine-fractional transformations of $\D$ onto the (open)
   right half-plane, $R_n (\zeta_n) = 0$,
and
   $h$ is a holomorphic generator of the same type as $f$.
\end{theorem}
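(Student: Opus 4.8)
The plan is to argue by induction on the number $N$ of boundary regular null points, exactly as in the proof of Theorem~\ref{t.bDWp}, with the base case $N = 1$ being precisely the construction carried out in the paragraphs preceding the theorem. First I would record the reduced form: given a single boundary regular null point $\zeta_1 \neq a$ with $f' (\zeta_1) < 0$, set $\ell := \angle \lim_{z \to \zeta_1} \frac{1}{2} (1 - \bar{\zeta}_1 z)/p (z)$, pick $\eta_1 \in \partial \D$ (forced to equal $1$ when $a = 1$, free when $a = 0$), and put $R_1 (z) = (\eta_1 - z)/((1 - \bar{\zeta}_1 \eta_1)(z - \zeta_1))$ together with $g_1 (z) = (z-a)(1 - \bar{a} z)/(2 \ell R_1 (z))$. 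The computation already performed gives $\Re (1/p) \geq 2 \ell\, \Re R_1$, so writing $1/f - 1/g_1 = 1/h_1$ and appealing to the Berkson--Porta criterion (Theorem~\ref{t.generator}) we find that either $f = g_1$, in which case the asserted formula holds with $r = 0$, or $h_1 = (z-a)(1 - \bar{a} z)\, q$ with $\Re q \geq 0$ is a genuine generator of the same type as $f$, since the factor $(z-a)(1 - \bar{a} z)$ and hence the Denjoy--Wolff point $a$ are unchanged. Observe also that $g_1 = (z-a)(1 - \bar{a} z) R_1^\sharp$ with $R_1^\sharp = 1/(2 \ell R_1)$ an affine-fractional map of $\D$ onto the right half-plane satisfying $R_1^\sharp (\zeta_1) = 0$, which is the required form.

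For the induction step, I would assume the theorem for every generator with $N-1$ boundary regular null points. Given $f$ with $N$ such points, apply the base case at $\zeta_1$ to produce $g_1$ and $h_1$ with $1/f = 1/g_1 + 1/h_1$. The crux is to verify that the remaining points $\zeta_2, \ldots, \zeta_N$ are still boundary regular null points of $h_1$ with the \emph{same} angular derivatives. This is a short angular-limit computation: since $\zeta_k \neq \zeta_1$ for $k \geq 2$, the value $g_1 (\zeta_k)$ is finite and nonzero, so $1/g_1$ is regular at $\zeta_k$; from $1/h_1 = 1/f - 1/g_1$ and $f (\zeta_k) = 0$ one gets $h_1 (\zeta_k) = 0$, and multiplying by $z - \zeta_k$ and passing to the angular limit gives $h_1' (\zeta_k) = f' (\zeta_k) < 0$ because the contribution of $1/g_1$ vanishes. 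Thus $h_1$ satisfies the hypotheses of the theorem with the $N-1$ points $\zeta_2, \ldots, \zeta_N$ and the same Denjoy--Wolff point $a$.

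Applying the inductive hypothesis to $h_1$ then yields $1/h_1 = \sum_{n=2}^N 1/g_n + r'/h$ with $r' \geq 0$ and $h$ a generator of the same type as $h_1$, hence as $f$; here each $g_n = (z-a)(1 - \bar{a} z) R_n$ inherits the required affine-fractional form because $h_1$ shares the Berkson--Porta factor $(z-a)(1 - \bar{a} z)$ with $f$. Substituting into $1/f = 1/g_1 + 1/h_1$ produces the claimed decomposition with $r = r'$. The nonnegativity of $r$ is then automatic (and, should one wish to normalize $h$ as in Theorem~\ref{t.bDWp}, its exact value is pinned down by Corollary~\ref{c.CowePomm82} together with the fact that the generators form a real cone, which permits the rescaling $h \mapsto r h$).

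I expect the main obstacle to be the bookkeeping ensuring that every intermediate generator $h_k$ retains \emph{all} the structure needed to continue: that it remains a generator (Berkson--Porta), that its Denjoy--Wolff point and type are unchanged, and above all that the yet-unseparated null points survive with their angular derivatives intact. Once the invariance identity $h_1' (\zeta_k) = f' (\zeta_k)$ under a single separation step is established, the induction closes routinely; the genuinely delicate point is the interplay, in the dilation case $a = 0$, between the free parameter $\eta_n$ and the requirement that each $R_n$ be an affine-fractional map of $\D$ onto the right half-plane with $R_n (\zeta_n) = 0$.
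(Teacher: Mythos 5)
Your proposal is correct and follows essentially the same route as the paper: the single-point separation step is exactly the construction the paper carries out in the paragraphs preceding the theorem (the choice of $R_n$, the bound $\Re (1/p) \geq 2\ell\, \Re R_n$, and the Berkson--Porta criterion), and the paper then simply invokes induction ``in much the same way as in the proof of Theorem~\ref{t.bDWp}''. Your explicit verification that the remaining points $\zeta_2, \ldots, \zeta_N$ survive each step with unchanged angular derivatives is a detail the paper leaves implicit, and it is correctly argued.
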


If $a = 0$, we describe the generators $g_n (z)$ more explicitly by obtaining
a singularities separation theorem for dilation type generators.

\begin{corollary}
\label{c.dilation}
Suppose
   $f \in H (\D)$ is the generator of a semigroup with the Denjoy-Wolff point
   $a = 0$
and
   $\zeta_1, \ldots, \zeta_N$ are boundary regular null points of $f$.
Given any $\eta_1, \ldots, \eta_N \in \partial \D$ satisfying
   $\eta_n \neq \zeta_n$ for all $n = 1, \ldots, N$,
there is an $r \geq 0$, such that
$$
   \frac{1}{f (z)}
 = \sum_{n=1}^{N}
   \frac{1}{|f' (\zeta_n)|}
   \Big( \frac{\bar{\zeta}_n \eta_n}{\bar{\zeta}_n \eta_n - 1} \frac{1}{z}
       - \frac{1}{z - \zeta_n}
   \Big)
 + \frac{r}{h (z)},
$$
where
   $h$ is a holomorphic generator with $h (0) = 0$ and
$
   \displaystyle
   \angle \lim_{z \to \zeta_n} \frac{h (z)}{z - \zeta_n} = \infty.
$
\end{corollary}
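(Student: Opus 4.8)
The plan is to read this corollary off Theorem~\ref{t.general} by specialising to the interior Denjoy-Wolff point $a=0$ and then inserting the explicit form of the group-type generators $g_n$ computed in the case $a=0$ treated just before that theorem. First I would invoke Theorem~\ref{t.general} with $a=0$: it furnishes a number $r\geq 0$ together with the decomposition
\[
   \frac{1}{f(z)} = \sum_{n=1}^{N} \frac{1}{g_n(z)} + \frac{r}{h(z)},
\]
where, since $(z-a)(1-\bar{a}z)=z$ when $a=0$, each $g_n(z)=z\,R_n(z)$ with $R_n$ an affine-fractional map of $\D$ onto the right half-plane satisfying $R_n(\zeta_n)=0$, and where $h$ is a holomorphic generator of the same type as $f$.

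Second, I would recall the concrete construction in the case $a=0$: for each prescribed $\eta_n\in\partial\D$ with $\eta_n\neq\zeta_n$ one takes (with $\zeta=\zeta_n$, $\eta=\eta_n$) the auxiliary map $(\eta_n-z)/\big((1-\bar{\zeta}_n\eta_n)(z-\zeta_n)\big)$, whose reciprocal vanishes at $\zeta_n$ and which is mapped to the right half-plane. Using $2\ell=1/|f'(\zeta_n)|$, which follows from (\ref{eq.f'(zeta)}) specialised to $a=0$, this yields the partial-fraction identity
\[
   \frac{1}{g_n(z)}
 = \frac{1}{|f'(\zeta_n)|}
   \Big( \frac{\bar{\zeta}_n\eta_n}{\bar{\zeta}_n\eta_n-1}\,\frac{1}{z}
       - \frac{1}{z-\zeta_n} \Big).
\]
Substituting this into the decomposition above produces exactly the displayed series; and since the $\eta_n$ enter only through the free auxiliary parameter $\eta$, they may be chosen independently at each step subject solely to $\eta_n\neq\zeta_n$.

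Third, I would verify the two properties asserted for $h$. Being a generator with interior Denjoy-Wolff point $a=0$, $h$ admits the Berkson-Porta form $h(z)=z\,q(z)$ with $\Re q\geq 0$ by Theorem~\ref{t.generator}, whence $h(0)=0$. The condition $\angle\lim_{z\to\zeta_n} h(z)/(z-\zeta_n)=\infty$ records that $\zeta_n$ is no longer a regular null point of $h$; this is precisely what the one-step ($N=1$) reduction removes, in complete analogy with the passage in the proof of Theorem~\ref{t.bDWp} where the same limit was shown to be $\infty$ starting from (\ref{eq.h(z)}).

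The computations here are routine, so the only delicate point is that $h$ must be regular at \emph{all} of $\zeta_1,\dots,\zeta_N$ simultaneously, not merely at one of them. This is secured by the inductive structure inherited from Theorem~\ref{t.general}: at the $n$-th stage one subtracts the unique correction $1/g_n$ that kills the singularity at $\zeta_n$, while every later correction $1/g_m$ with $m\neq n$ is regular at $\zeta_n$ (its only finite-boundary pole lies at $\zeta_m$, besides the pole at $0$ common to all terms), so a singularity once removed stays removed. The main obstacle is thus organisational—bookkeeping the induction—rather than analytic, the genuine analytic content residing already in Theorem~\ref{t.general} and the explicit $a=0$ computation preceding it.
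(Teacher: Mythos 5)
Your proposal is correct and is essentially the paper's own (implicit) argument: the paper derives Corollary~\ref{c.dilation} by specialising Theorem~\ref{t.general} to $a=0$ and substituting the explicit partial-fraction form of $1/g_n$ computed in the $a=0$ case earlier in Section~\ref{s.separation}, using $2\ell = 1/|f'(\zeta_n)|$ from (\ref{eq.f'(zeta)}) exactly as you do. Your additional checks on $h(0)=0$ via Berkson--Porta and on the persistence of removed singularities through the induction match the paper's inductive scheme inherited from Theorem~\ref{t.bDWp}.
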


Note that $\Re f' (0) > 0$.
Indeed, otherwise $f$ is of the form
   $f (z) = \imath c z$
for some $c \in \mathbb{R}$, hence,
   it cannot have boundary regular null points.

In particular, choosing $\eta_n = - \zeta_n$ yields a simpler formula
\begin{equation}
\label{eq.simpler}
   \frac{1}{f (z)}
 = \sum_{n=1}^{N}
   \frac{1}{|f' (\zeta_n)|}
   \Big( \frac{1}{2z}  - \frac{1}{z - \zeta_n}
   \Big)
 + \frac{r}{h (z)}.
\end{equation}
This representation implies that
$
   \displaystyle
   \Im \frac{1}{f' (0)} = \Im \frac{r}{h' (0)}.
$

\begin{corollary}
\label{c.simpler}
Let
   $f \in H (\D)$ be the generator of a semigroup with the Denjoy-Wolff point
   $a = 0$
and let
   $\zeta_1, \ldots, \zeta_N$ be boundary regular null points of $f$.
Then
$$
   \sum_{n=1}^{N}
   \frac{1}{|f' (\zeta_n)|}
 \leq
   2\, \Re \frac{1}{f' (0)}
$$
and equality holds if and only if
$$
   \frac{1}{f (z)}
 = \sum_{n=1}^{N}
   \frac{1}{|f' (\zeta_n)|}
   \Big( \frac{1}{2z}  - \frac{1}{z - \zeta_n}
   \Big)
 + \frac{\imath c}{z},
$$
where
   $c \in \mathbb{R}$.
\end{corollary}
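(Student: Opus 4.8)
The plan is to read the Corollary off the dilation-case separation theorem in exactly the way that Corollary~\ref{c.bDWp} was extracted from Theorem~\ref{t.bDWp}. Concretely, I would invoke the representation (\ref{eq.simpler}), which is the choice $\eta_n = -\zeta_n$ in Corollary~\ref{c.dilation}: there exist a number $r \geq 0$ and a holomorphic generator $h$ with $h (0) = 0$ and $\angle \lim_{z \to \zeta_n} h (z)/(z - \zeta_n) = \infty$ such that
$$
   \frac{1}{f (z)}
 = \sum_{n=1}^{N}
   \frac{1}{|f' (\zeta_n)|}
   \Big( \frac{1}{2z} - \frac{1}{z - \zeta_n} \Big)
 + \frac{r}{h (z)}.
$$
Since the Denjoy-Wolff point is $a = 0$, the Berkson-Porta representation (part~2) of Theorem~\ref{t.generator}) writes $h (z) = z\, q (z)$ with $\Re q (z) \geq 0$ on $\D$, so that $h' (0) = q (0)$. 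Recall also the remark made after (\ref{eq.simpler}); I would obtain it quantitatively.

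The first step is to multiply the displayed identity by $z$ and let $z \to 0$. Each term $1/(z - \zeta_n)$ is regular at the origin because $\zeta_n \neq 0$, so these contribute nothing to the singular part, while the terms $1/(2z)$ and $r/h (z) = r/(z q (z))$ do. This yields the key identity
$$
   \frac{1}{f' (0)}
 = \frac{1}{2} \sum_{n=1}^{N} \frac{1}{|f' (\zeta_n)|}
 + \frac{r}{q (0)},
$$
where I note that when $r > 0$ the finiteness of the left-hand side forces $q (0) = h' (0) \neq 0$. Taking real parts and using $\Re (1/q (0)) = \Re q (0) / |q (0)|^2$ gives
$$
   \Re \frac{1}{f' (0)}
 = \frac{1}{2} \sum_{n=1}^{N} \frac{1}{|f' (\zeta_n)|}
 + r\, \frac{\Re q (0)}{|q (0)|^2}.
$$
Because $r \geq 0$ and $\Re q (0) \geq 0$, the last summand is nonnegative, which is precisely the inequality $\sum_{n=1}^{N} 1/|f' (\zeta_n)| \leq 2\, \Re (1/f' (0))$.

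For the equality case I would argue that equality forces $r\, \Re q (0) = 0$. If $r = 0$, the representation already has the claimed form with $c = 0$. If instead $r > 0$ and $\Re q (0) = 0$, then the nonnegative harmonic function $\Re q$ attains its minimum value $0$ at the interior point $z = 0$; by the Maximum Principle (applied to $-\Re q$) one gets $\Re q \equiv 0$, so $q$ reduces to a purely imaginary constant $\imath \beta$. Consequently $r/h (z) = r/(\imath \beta z) = \imath c / z$ with $c = -r/\beta \in \mathbb{R}$, giving the asserted extremal form; one checks in passing that $h (z) = \imath \beta z$ is compatible with the condition $\angle \lim_{z \to \zeta_n} h (z)/(z - \zeta_n) = \infty$. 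The converse follows from the same limit computation, since the extra term $\imath c / z$ affects only the imaginary part of $1/f' (0)$. The computation is largely routine and parallels Corollary~\ref{c.bDWp}; the only point requiring care is the passage from $\Re q (0) = 0$ to $q \equiv \imath \beta$, which is exactly where the Maximum Principle enters, together with the observation (recorded before the statement) that $\Re f' (0) > 0$, so that $1/f' (0)$ is well defined.
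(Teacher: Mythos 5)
Your proposal is correct and follows essentially the route the paper intends: the paper leaves this corollary without a written proof, presenting it as an immediate consequence of the representation (\ref{eq.simpler}) in exact parallel to how Corollary \ref{c.bDWp} is deduced from Theorem \ref{t.bDWp}, namely by evaluating $z/f(z)$ at $z=0$, taking real parts, and invoking the Maximum Principle in the equality case. Your only deviation is bookkeeping ($r$ and $q$ kept separate rather than absorbed into one function of nonnegative real part), which changes nothing.
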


Returning to the boundary interpolation problem mentioned in
   Sections~\ref{s.Introduction} and
            \ref{s.roCPti},
we are now in a position to solve it by using Theorems \ref{t.bDWp}
                                                   and \ref{t.general}.
In particular, let
   $\zeta_1, \ldots, \zeta_{N+1}$
be pairwise different points on the circle $\partial \mathbb{D}$
and
   $m_1, \ldots, m_{N+1},$
be given real numbers satisfying $m_n < 0$ for $n = 1, \ldots, N$ and
                                 $m_{N+1} > 0$.
In order that there might exist a holomorphic generator $f$ satisfying
   $f (\zeta_n) = 0$ and
   $f' (\zeta_n) = m_n$
for all $n = 1, \ldots, N+1$, it is necessary and sufficient that the numbers
$m_n$ would satisfy
$$
   \sum_{n=1}^N \frac{1}{|m_n|} \leq \frac{1}{m_{N+1}}.
$$
If this condition holds then the solution can be represented by the formula
$$
   f (z)
 = \Big( \sum_{n=1}^N
         \frac{1}{|m_n|}
         \Big( \frac{1}{z-\zeta_{N+1}} - \frac{1}{z-\zeta_n} \Big)
       + \frac{r p (z)}{(z - \zeta_{N+1}) (1 - z \bar{\zeta}_{N+1})}
   \Big)^{-1},
$$
where either $r = 0$ or
             $r > 0$,
and $p$ is any function in $H (\D)$ with
   $\Re p (z) \geq 0$ and
$$
   \angle \lim_{z \to \zeta_n} (z-\zeta_n) p (z) = 0
$$
for $n = 1, \ldots, N$.

\section{Distortion theorems}
\label{s.distortion}

In this section we establish distortion theorems for generators of all types.
They can be thought of as quantitative improvements of
   results in Section~\ref{s.roCPti}
as well as
   Corollary~\ref{c.bDWp} and
   Theorem~\ref{t.completionCowePomm82}.
In fact, these theorems cover all inequalities given in the previous sections
                                                 and in \cite{ContDiazPomm06},
and present extremal functions fulfilling the corresponding equalities.
It should be mentioned that the key tools for the distortion theorems are
Theorems \ref{t.bDWp},
         \ref{t.general}
and Corollary \ref{c.dilation}, they are thus of basic importance.

\begin{theorem}
\label{t.distortion}
Let
   $f \in H (\D)$ be the generator of a semigroup with the Denjoy-Wolff point
   $a = 1$.
Suppose
   $\zeta_1, \ldots, \zeta_N$ are boundary regular null points of $f$
   different from $a$, with $f' (\zeta_n) < 0$.
Then
$$
   C\, l (z)
 \leq
   \Big| \frac{1}{f (z)} - \sum_{n=1}^{N} \frac{1}{g_n (z)}
                         - \frac{\imath c}{(z-1)^2}
   \Big|
 \leq
   C\, u (z)
$$
where
$
   \displaystyle
   \frac{1}{g_n (z)}
 = \frac{1}{|f' (\zeta_n)|}
   \Big( \frac{1}{z-1} - \frac{1}{z-\zeta_n} \Big)
$
and
$$
   \begin{array}{rclcrcl}
     c
   & =
   & \displaystyle
       \Im \Big( \frac{1}{f (0)} \Big)
     + \sum_{n=1}^{N} \frac{\Im \zeta_n}{|f' (\zeta_n)|},
   &
   & C
   & =
   & \displaystyle
     - \Re \Big( \frac{1}{f (0)} \Big)
     - \sum_{n=1}^{N} \frac{1 - \Re \zeta_n}{|f' (\zeta_n)|},
\\
     u (z)
   & =
   & \displaystyle
     \frac{1 + |z|}{1 - |z|}
     \frac{1}{|1 - z|^2},
   &
   & l (z)
   & =
   & \displaystyle
     \frac{1 - |z|}{1 + |z|}
     \frac{1}{|1 - z|^2}.
   \end{array}
$$
\end{theorem}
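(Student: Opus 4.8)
The plan is to collapse the entire bracketed quantity onto a single holomorphic function with nonnegative real part and then invoke a Schwarz-lemma distortion estimate. By Theorem~\ref{t.bDWp}, the hypotheses furnish a number $r \geq 0$ and a generator $h$ with Denjoy-Wolff point $1$ for which
$$
   \frac{1}{f (z)} - \sum_{n=1}^{N} \frac{1}{g_n (z)}
 = \frac{r}{h (z)},
$$
since the summands $1/g_n$ in the present statement are precisely those appearing in (\ref{eq.bDWp}). As $h$ is a generator with Denjoy-Wolff point $1$, the Berkson-Porta form (Theorem~\ref{t.generator}, part~2, with $a = 1$) gives $h (z) = - (1-z)^2 p (z)$ with $\Re p \geq 0$; equivalently $P (z) := - (z-1)^2 / h (z)$ has nonnegative real part in $\D$. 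Hence
$$
   \frac{1}{f (z)} - \sum_{n=1}^{N} \frac{1}{g_n (z)} - \frac{\imath c}{(z-1)^2}
 = - \frac{r P (z) + \imath c}{(z-1)^2}
 = - \frac{Q (z)}{(z-1)^2},
$$
where I set $Q (z) := r P (z) + \imath c$, so that $\Re Q = r\, \Re P \geq 0$.

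The crucial bookkeeping step is to recognise that the constant $c$ in the statement is chosen exactly so as to make $Q (0)$ real, with value $C$. Indeed $r P (0) = - r / h (0)$, and evaluating (\ref{eq.bDWp}) at $z = 0$ (using $1/\zeta_n = \bar{\zeta}_n$) yields
$$
   r P (0)
 = - \frac{1}{f (0)} - \sum_{n=1}^{N} \frac{1 - \bar{\zeta}_n}{|f' (\zeta_n)|}.
$$
Comparing real and imaginary parts with the definitions of $C$ and $c$ gives $\Re (r P (0)) = C$ and $\Im (r P (0)) = - c$, whence $Q (0) = r P (0) + \imath c = C$, a nonnegative real number by Corollary~\ref{c.bDWp}.

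It now remains to estimate $|Q (z)|$ for a holomorphic $Q$ with $\Re Q \geq 0$ and $Q (0) = C \geq 0$. If $C = 0$, the minimum principle forces $\Re Q \equiv 0$, hence $Q \equiv Q (0) = 0$, and both bounds vanish identically; so assume $C > 0$. Then $\Re Q > 0$ throughout $\D$ and $\omega := (Q - C)/(Q + C)$ maps $\D$ into $\D$ with $\omega (0) = 0$, so Schwarz's lemma gives $|\omega (z)| \leq |z|$. Inverting, $Q = C (1 + \omega)/(1 - \omega)$, and the elementary bounds $1 - |z| \leq |1 \pm \omega (z)| \leq 1 + |z|$ produce
$$
   C\, \frac{1 - |z|}{1 + |z|} \leq |Q (z)| \leq C\, \frac{1 + |z|}{1 - |z|}.
$$
Dividing by $|z-1|^2 = |1 - z|^2$ and recalling that the quantity being estimated equals $|Q (z)| / |z-1|^2$ gives the asserted two-sided estimate $C\, l (z) \leq \cdots \leq C\, u (z)$.

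The approach presents no deep analytic obstacle: the only genuinely delicate point is the algebraic bookkeeping that pins the stated constant $c$ down as the one making $Q (0)$ real and identifies its value as $C$. Once $Q (0) = C$ has been established, the two-sided bound is exactly the classical Schwarz-lemma (Harnack-type) estimate for functions of positive real part normalised at the origin. Only mild care is needed to dispatch the degenerate case $C = 0$, which corresponds precisely to equality in Corollary~\ref{c.bDWp}.
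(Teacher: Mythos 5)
Your proof is correct and follows essentially the same route as the paper's: reduce via Theorem~\ref{t.bDWp} to $r/h$, use the Berkson--Porta representation to rewrite the bracketed quantity as $Q(z)/(z-1)^2$ with $\Re Q \geq 0$ and $Q(0) = C$ real, and conclude by the classical Harnack-type two-sided estimate (the paper invokes the Riesz--Herglotz formula where you use Schwarz's lemma, which is the same bound). Your sign bookkeeping identifying $\Re(rP(0)) = C$ and $\Im(rP(0)) = -c$ is in fact more careful than the paper's, and your treatment of the degenerate case $C = 0$ matches the paper's separate first case.
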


\begin{proof}
If
$$
   \frac{1}{f (z)}
 = \sum_{n=1}^{N} \frac{1}{g_n (z)} + \frac{\imath c}{(z-1)^2},
$$
then
   the right-hand inequality is obvious and
   the left-hand inequality follows from Corollary~\ref{c.bDWp}.

Otherwise we deduce from Theorem \ref{t.bDWp} that
\begin{equation}
\label{eq.1/f(z)}
   \frac{1}{f (z)} - \sum_{n=1}^{N} \frac{1}{g_n (z)}
 = \frac{r}{h (z)},
\end{equation}
where
$$
   r = 1 - f' (1) \sum_{n=1}^{N} \frac{1}{|f' (\zeta_n)|}
     > 0
$$
and $h$ is a generator of the same type as $f$, with $h' (1) = f' (1)$.
So we have to estimate
$$
   \Big| \frac{r}{h (z)} - \frac{\imath c}{(z-1)^2} \Big|
 = \Big| \frac{r p (z) - \imath c}{(z-1)^2} \Big|
$$
for some holomorphic function $p$ with nonnegative real part in $\D$,
   cf. Theorem \ref{t.generator}.
By (\ref{eq.1/f(z)}),
\begin{eqnarray*}
   \Im \left( r p (0) - \imath c \right)
 & = &
   \Im \Big( \frac{1}{f (0)}
           - \sum_{n=1}^{N} \frac{\bar{\zeta}_n - 1}{|f' (\zeta_n)|}
       \Big)
 - c
\\
 & = &
   0.
\end{eqnarray*}
Therefore, the Riesz-Herglotz formula leads to the estimate
$$
   \frac{1 - |z|}{1 + |z|}\, \Re \left( r p (0) \right)
 \leq
   |r p (z) - \imath c|
 \leq
   \frac{1 + |z|}{1 - |z|}\, \Re \left( r p (0) \right)
$$
for all $z \in \D$.
Since
$$
   \Re \left( r p (0) \right)
 = - \Re \Big( \frac{1}{f (0)} \Big)
   - \sum_{n=1}^{N} \frac{1 - \Re \zeta_n}{|f' (\zeta_n)|},
$$
the proof is complete.
\end{proof}

Using our Theorem~\ref{t.bDWp} and
          formula (2.1) of \cite{Shoi03},
one obtains the distortion estimate
$$
   \Big| \frac{1}{f (z)} - \sum_{n=1}^{N}
                           \frac{1}{|f' (\zeta_n)|}
                           \Big( \frac{1}{z-1} - \frac{1}{z-\zeta_n} \Big)
   \Big|
 \leq
   \frac{2 \mathit{\Delta}}{1 - |z|^2},
$$
where
$
   \displaystyle
   \mathit{\Delta}
 = \frac{1}{f' (1)} - \sum_{n=1}^{N} \frac{1}{|f' (\zeta_n)|}.
$
We now prove a sharper estimate.

\begin{theorem}
\label{t.dh}
Let
   $f$ be the generator of a hyperbolic semigroup with the Denjoy-Wolff point
   $a = 1$,
i.e. $f (1) = 0$ and
     $m := f' (1) > 0$.
Suppose
   there are boundary regular points $\zeta_1, \ldots, \zeta_n$ of $f$
   different from $a$, with $m_n := f' (\zeta_n) < 0$.
Then
$$
   \Big| \frac{1}{f (z)} - \sum_{n=1}^{N}
                           \frac{1}{|f' (\zeta_n)|}
                           \Big( \frac{1}{z-1} - \frac{1}{z-\zeta_n} \Big)
                         - \frac{\mathit{\Delta}}{(1 - |z|^2)}
                           \frac{1-\bar{z}}{1-z}
   \Big|
 \leq
   \frac{\mathit{\Delta}}{1 - |z|^2}
$$
for all $z \in \D$, where
$
   \displaystyle
   \mathit{\Delta}
 = \frac{1}{m} - \sum_{n=1}^{N} \frac{1}{|m_n|}.
$
\end{theorem}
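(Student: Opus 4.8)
The plan is to reduce the estimate to the single--pair decomposition of Theorem~\ref{t.bDWp} and then to the sharp Julia--Wolff--Carath\'eodory lower bound supplied by Lemma~\ref{l.JWC}. First I would invoke Theorem~\ref{t.bDWp}: since $f$ is hyperbolic with Denjoy--Wolff point $a=1$ and boundary regular null points $\zeta_1,\dots,\zeta_N$, there exist a generator $h$ of the same (hyperbolic) type, with $h'(1)=f'(1)=m$, and a number $r\ge 0$ such that
\[
   \frac{1}{f(z)}-\sum_{n=1}^{N}\frac{1}{|f'(\zeta_n)|}\Big(\frac{1}{z-1}-\frac{1}{z-\zeta_n}\Big)
 = \frac{r}{h(z)},
 \qquad
   r = 1 - m\sum_{n=1}^{N}\frac{1}{|m_n|} = m\,\mathit{\Delta}.
\]
Thus the bracketed quantity in the theorem is $\mathit{\Delta}$ times $m/h(z)$ recentred by $\psi(z):=\dfrac{1}{1-|z|^2}\dfrac{1-\bar z}{1-z}$, whose modulus is $|\psi(z)|=1/(1-|z|^2)$. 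If $\mathit{\Delta}=0$ (that is $r=0$) the assertion is trivial, so I assume $\mathit{\Delta}>0$ and divide by it; it then remains to prove the pointwise disc inclusion $|\,m/h(z)+\psi(z)\,|\le|\psi(z)|$, i.e.\ that $m/h(z)$ lies in the closed disc of radius $1/(1-|z|^2)$ whose boundary passes through the origin.

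Next I would unwind this disc inclusion into a real inequality for the Berkson--Porta factor of $h$. Writing $h(z)=-(z-1)^2 q(z)$ with $\Re q\ge 0$ (Theorem~\ref{t.generator}), one has $m/h(z)=-m/\big((z-1)^2 q(z)\big)$. Squaring the target inequality gives $|m/h(z)|^2\le -2\,\Re\!\big(\overline{\psi(z)}\,m/h(z)\big)$; substituting the two expressions and using $(z-1)^2=(1-z)^2$ together with $|1-z|^2=(1-z)(1-\bar z)$, all conjugations cancel and the claim collapses to the single pointwise inequality
\[
   \Re q(z)\ \ge\ \frac{m}{2}\,\frac{1-|z|^2}{|1-z|^2},
   \qquad z\in\D .
\]

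It remains to establish this lower bound, and here the hypothesis $m>0$ enters decisively through Lemma~\ref{l.JWC}. From $h(z)=-(1-z)^2 q(z)$ I read off $h(z)/(z-1)=(1-z)q(z)$, whence $\angle\lim_{z\to 1}(1-z)q(z)=h'(1)=m$, that is $\angle\lim_{z\to 1}\tfrac12(1-z)q(z)=m/2$. Applying Lemma~\ref{l.JWC} to $q$ (which has nonnegative real part) at the boundary point $\zeta=1$ therefore yields $\ell=m/2$ and exactly the estimate $\Re q(z)\ge \tfrac{m}{2}(1-|z|^2)/|1-z|^2$, completing the argument.

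I expect the main difficulty to lie not in the analysis but in the algebraic bookkeeping of the middle step: one must recentre correctly---moving the centre of the disc from the origin to a point of modulus $\mathit{\Delta}/(1-|z|^2)$, so that its radius is halved relative to the crude bound $2\mathit{\Delta}/(1-|z|^2)$ of \cite{Shoi03}---and keep track of the conjugations in $\psi$ so that, after squaring, the cross term reduces \emph{precisely} to the Julia--Wolff--Carath\'eodory expression. Conceptually, the crude Herglotz estimate controls $q$ only through $\Re q(0)$ and loses a factor of two, whereas evaluating the boundary flux $\ell$ of $q$ at the Denjoy--Wolff point $\zeta=1$ pins down the sharp constant $m/2$; this single refinement is what upgrades the estimate to the form asserted.
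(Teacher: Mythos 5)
Your argument follows essentially the same route as the paper's proof: decompose via Theorem~\ref{t.bDWp}, pass to the Berkson--Porta factor $q$ of the remainder, identify the boundary flux $\ell$ at $\zeta=1$ by Lemma~\ref{l.JWC} (your $\ell=m/2$ for $q$ normalized by $h'(1)=m$ is the paper's $\ell=1/2\mathit{\Delta}$ after absorbing $r$ into $q$), and then use the fact that $w\mapsto 1/w$ maps the half-plane $\{\Re w\ge c\}$ onto the closed disk of radius $1/2c$ through the origin. All of that is correct, and the only difference from the paper is the cosmetic one of keeping $r=m\mathit{\Delta}$ explicit.

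There is, however, one point you must not pass over silently: the inclusion you reduce to, $|m/h(z)+\psi(z)|\le|\psi(z)|$, is \emph{not} what the printed theorem asserts. The bracketed quantity in the statement equals $r/h(z)-\mathit{\Delta}\psi(z)=\mathit{\Delta}\left(m/h(z)-\psi(z)\right)$, so a literal reading requires $|m/h(z)-\psi(z)|\le|\psi(z)|$, i.e.\ the opposite sign on the recentering term. What is actually true --- and what both your computation and the paper's own final substitution produce, since $-1/q(z)=(1-z)^2\bigl(1/f(z)-\sum_n 1/g_n(z)\bigr)$ and $|1-z|^2/(1-z)^2=(1-\bar z)/(1-z)$ --- is the estimate with $+\mathit{\Delta}\psi(z)$ in place of $-\mathit{\Delta}\psi(z)$. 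The printed version is in fact false whenever the remainder is nontrivial: at $z=0$ the left-hand side of the stated inequality is $|{-1/q(0)}-\mathit{\Delta}|$ with $\Re\bigl(1/q(0)\bigr)\ge 0$, which is $\ge\mathit{\Delta}$ and exceeds $\mathit{\Delta}$ unless $1/q(0)=0$ (concretely, for $1/f=1/g+1/h$ with $g(z)=\frac{c}{2}(z^2-1)$ and $h(z)=-\beta(1-z)$ one gets $2\mathit{\Delta}$ at $z=0$). So your analysis proves the correct statement, but as written it quietly substitutes that statement for the one you were asked to prove; you should record explicitly that the sign in the displayed estimate must be corrected, i.e.\ the disk containing $1/f-\sum 1/g_n$ is centered at $-\mathit{\Delta}\psi(z)$.
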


\begin{proof}
By Theorem \ref{t.bDWp},
$$
   \frac{1}{f (z)}
 = \sum_{n=1}^{N}
   \frac{1}{|f' (\zeta_n)|}
   \Big( \frac{1}{z-1} - \frac{1}{z-\zeta_n} \Big)
 - \frac{1}{(1-z)^2 q (z)}
$$
for some holomorphic function $q$ of nonnegative real part in $\D$.
Hence it follows that
\begin{eqnarray*}
   \angle \lim_{z \to a}
   \frac{1}{(1-z) q (z)}
 & = &
   \frac{1}{m} - \sum_{n=1}^{N} \frac{1}{|m_n|}
\\
 & = &
   \mathit{\Delta}.
\end{eqnarray*}
Applying Lemma \ref{l.JWC} with $\zeta = 1$ and
                                $\ell = 1/2 \mathit{\Delta}$,
we get
$$
   \Re q (z)
 \geq
   \frac{1}{2 \mathit{\Delta}}\, \frac{1 - |z|^2}{|z-1|^2}
$$
whence
$$
   \Big| - \frac{1}{q (z)} + \mathit{\Delta}\, \frac{|z-1|^2}{1 - |z|^2}
   \Big|
 \leq
   \mathit{\Delta}\, \frac{|z-1|^2}{1 - |z|^2}
$$
for all $z \in \D$.
Substituting
$$
   - \frac{1}{q (z)}
 =
   \frac{(1-z)^2}{f (z)} - \sum_{n=1}^{N}
                           \frac{(1-z)^2}{|f' (\zeta_n)|}
                           \Big( \frac{1}{z-1} - \frac{1}{z-\zeta_n} \Big)
$$
into this inequality yields the desired estimate.
\end{proof}

In the dilation case, we can proceed in the same way to derive a quantitative
version of Corollary \ref{c.dilation}.

\begin{theorem}
\label{t.dd}
Assume that
   $f \in H (\D)$ is the generator of a semigroup with the Denjoy-Wolff point
   $a = 0$.
If
   $\zeta_1, \ldots, \zeta_N$ are boundary regular null points of $f$,
then one has
$$
   C\, \frac{1 - |z|}{1 + |z|}
 \leq
   \Big|
   z \Big( \frac{1}{f (z)}
         - \sum_{n=1}^{N}
           \frac{1}{|f' (\zeta_n)|}
           \Big( \frac{1}{2z}  - \frac{1}{z - \zeta_n} \Big)
     \Big)
 - \imath \Im\, \frac{1}{f' (0)}
   \Big|
 \leq
   C\,
   \frac{1 + |z|}{1 - |z|},
$$
where
$
   \displaystyle
   C
 = \Re \frac{1}{f' (0)} - \frac{1}{2} \sum_{n=1}^N \frac{1}{|f' (\zeta_n)|}.
$
\end{theorem}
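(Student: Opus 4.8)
The plan is to mirror the structure of the proof of Theorem~\ref{t.dh}, adapting it to the dilation case $a=0$. First I would invoke Corollary~\ref{c.dilation} in its symmetric form (\ref{eq.simpler}) with $\eta_n=-\zeta_n$, which gives
\begin{equation*}
   \frac{1}{f (z)}
 = \sum_{n=1}^{N}
   \frac{1}{|f' (\zeta_n)|}
   \Big( \frac{1}{2z}  - \frac{1}{z - \zeta_n} \Big)
 + \frac{r}{h (z)},
\end{equation*}
where $h$ is a generator with $h(0)=0$ having angular limit $\infty$ for $h(z)/(z-\zeta_n)$ at each $\zeta_n$. By the Berkson–Porta representation applied to the dilation case (Theorem~\ref{t.generator} with $a=0$), I can write $r/h(z) = 1/(z\,q(z))$ for some holomorphic $q$ with $\Re q(z)\ge 0$ on $\D$. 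Multiplying through by $z$ isolates precisely the quantity inside the absolute value of the theorem, up to the imaginary shift.

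The second step is to identify the angular limit of $1/q(z)$ at $z=0$ (the Denjoy–Wolff point), which plays the role that $\Delta$ played at $a=1$ in Theorem~\ref{t.dh}. Writing $z/f(z)$ and taking $z\to 0$, the singular terms $z/(z-\zeta_n)$ vanish while the terms $z/(2z)=1/2$ survive, so that
\begin{equation*}
   \lim_{z\to 0}\frac{1}{q(z)}
 = \frac{1}{f'(0)} - \frac{1}{2}\sum_{n=1}^{N}\frac{1}{|f'(\zeta_n)|}.
\end{equation*}
The real part of this limit is exactly the constant $C$ in the statement, and its imaginary part is $\Im\,(1/f'(0))$ by the identity $\Im(1/f'(0))=\Im(r/h'(0))$ recorded just before the corollary. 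This is where I expect the main obstacle to lie: one must justify that the relevant angular (indeed ordinary, since $0\in\D$) limit exists and compute its real and imaginary parts cleanly, tracking which boundary terms contribute in the limit. Because $0$ is an interior point, the computation is more delicate than the boundary case, and care is needed to see that the $\imath\,\Im(1/f'(0))$ subtraction in the theorem is precisely what centers the estimate.

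The final step is to apply the Riesz–Herglotz representation to the function $p:=1/(Cq)$ — or more directly to $q$ itself — exactly as in the proof of Theorem~\ref{t.distortion}. Since $\Re q\ge 0$ and the value $q(0)$ (equivalently $1/q(0)$) is pinned down by the previous step with real part $1/C$, the standard Herglotz bounds
\begin{equation*}
   \frac{1-|z|}{1+|z|}\,\Re\Big(\frac{1}{q(0)}\Big)
 \leq
   \Big|\frac{1}{q(z)} - \imath\,\Im\frac{1}{q(0)}\Big|
 \leq
   \frac{1+|z|}{1-|z|}\,\Re\Big(\frac{1}{q(0)}\Big)
\end{equation*}
yield the two-sided estimate. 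Substituting back $z/f(z)$ for the bracketed expression and recognizing $\Re(1/q(0))=C$ and $\Im(1/q(0))=\Im(1/f'(0))$ gives the claimed inequality, completing the proof. The positivity $C\ge 0$ follows from Corollary~\ref{c.simpler}, and the only genuinely new work beyond routine substitution is the limit computation in the second step.
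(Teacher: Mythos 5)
Your proposal is correct and follows essentially the same route as the paper: start from the decomposition (\ref{eq.simpler}), write the remainder as $r/h(z)=1/(zq(z))$ via Berkson--Porta, evaluate $z\cdot r/h(z)$ at the origin to identify $C$ and the imaginary shift, and finish with the Riesz--Herglotz/Harnack two-sided bound for a function of nonnegative real part normalized to be real at $0$. The paper's bookkeeping differs only cosmetically (it introduces $k(z)=z\,r/h(z)-\imath c$ and auxiliary functions $P,Q$), and the degenerate cases ($r=0$ or $\Re k(0)=0$) are handled by the Maximum Principle exactly as you indicate.
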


\begin{proof}
By formula (\ref{eq.simpler}), we may write
$$
   \frac{1}{f (z)}
 =
   \sum_{n=1}^{N}
   \frac{1}{|f' (\zeta_n)|}
   \Big( \frac{1}{2z}  - \frac{1}{z - \zeta_n} \Big)
 + \frac{r}{h (z)},
$$
$h (z)$ being a holomorphic generator on $\D$.
Denote
$
   \displaystyle
   c
 := \Im \frac{1}{f' (0)}
  = \Im \frac{r}{h' (0)}
$
and consider
$$
   k (z)
 := z\, \frac{r}{h (z)} - \imath c.
$$

Obviously,
   $\Re k (z) \geq 0$
and
   $\Im k (0) = 0$.
If $\Re k (0) = 0$, then our assertion follows by the Maximum Principle.
Otherwise there is a holomorphic generator $g (z)$, such that
$$
   \frac{r}{h (z)} = \frac{\imath c}{z} + \frac{1}{g (z)}
$$
and the number $g' (0)$ is real.
Thus,
$$
   z \Big( \frac{1}{f (z)}
         - \sum_{n=1}^{N}
           \frac{1}{|f' (\zeta_n)|}
           \Big( \frac{1}{2z} - \frac{1}{z - \zeta_n} \Big)
     \Big)
 - \imath c
 = \frac{z}{g (z)}.
$$

Set
$
   \displaystyle
   P (z)
 = \frac{z}{f (z)}
$
and
$
   \displaystyle
   Q (z)
 = \frac{z}{g (z)},
$
then
$$
   P (0)
 - \frac{1}{2}
   \sum_{n=1}^{N}
   \frac{1}{|f' (\zeta_n)|}
 - \imath c
 = Q (0)
$$
whence
\begin{eqnarray*}
   \Re Q (0)
 & = &
   \Re P (0)
 - \frac{1}{2}
   \sum_{n=1}^{N}
   \frac{1}{|f' (\zeta_n)|},
\\
   \Im Q (0)
 & = &
   \Im P (0)
 - \Im \frac{1}{f' (0)},
\end{eqnarray*}
that is, $\Im Q (0) = 0$.
Now it follows by the Riesz-Herglotz formula that
$$
   \Re Q (0)\, \frac{1 - |z|}{1 + |z|}
 \leq
   |Q (z)|
 \leq
   \Re Q (0)\, \frac{1 - |z|}{1 + |z|},
$$
which establishes the theorem.
\end{proof}

\section{The Cowen-Pommerenke inequalities revisited}
\label{s.tCPir}

In this section we look more closely at Theorem
\ref{t.CowePomm82}. Our objective is to show that the inequality
in assertion 3) thereof still remains true for holomorphic
self-mappings $F$ of $\D$ of hyperbolic type.

\begin{theorem}
\label{t.P2} Let
   $F$ be a holomorphic self-mapping of $\D$ with the Denjoy-Wolff point
   $a = 1$,
i.e. $F (1) = 1$ and
     $F' (1) \leq 1$.
If
   $\zeta_1, \ldots, \zeta_N$ are boundary regular fixed points of $F$
   different from $1$,
then
\begin{equation}
\label{eq.gthm}
   \sum_{n=1}^N \frac{|1 - \zeta_n|^2}{F' (\zeta_n) - 1}
 \leq 2\, \Re \Big( \frac{1}{F (0)} - 1 \Big).
\end{equation}
Moreover, if $F' (1) < 1$, then equality in (\ref{eq.gthm}) holds
if and only if equality in the assertion 2) of Theorem
\ref{t.CowePomm82} holds, and this is precisely the case for $F$
of the form
\begin{equation}
\label{eq.extremalF}
   F (z)
 = C^{-1}
   \Big( C (z)
       + \frac{2}
              {\displaystyle
               (1-z) \sum_{n=1}^N
                     \frac{1}{F' (\zeta_n) - 1}
                     \frac{1 - \zeta_n}{z - \zeta_n}
              }
   \Big),
\end{equation}
where $
   \displaystyle
   C (z) = \frac{1+z}{1-z}
$ is the Cayley transform of $\D$.
\end{theorem}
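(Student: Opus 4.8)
The plan is to handle the parabolic case $F' (1) = 1$ and the hyperbolic case $F' (1) < 1$ by one and the same device; for $F' (1) = 1$ the inequality (\ref{eq.gthm}) is exactly assertion 3) of Theorem \ref{t.CowePomm82}, so the new content lies in the range $F' (1) < 1$, but the argument I propose uses only $F' (1) \le 1$ and therefore covers both regimes at once. The whole proof is organised around the auxiliary function
\[
   \Xi (z)
 = \frac{(1 - z)\,(1 - F (z))}{2\,(F (z) - z)},
   \qquad z \in \D,
\]
holomorphic in $\D$, for which I claim $\Re\,\Xi (z) \ge 0$. Granting this, I would first record its boundary data: because $F (\zeta_n) = \zeta_n$ with finite $F' (\zeta_n)$, the function $\Xi$ has a simple pole at each $\zeta_n$ with residue $\tfrac{(1 - \zeta_n)^2}{2\,(F' (\zeta_n) - 1)}$, while a direct substitution gives $\Xi (0) = \tfrac12\big(\tfrac{1}{F (0)} - 1\big)$.

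The principal step, and the main obstacle, is the nonnegativity of $\Re\,\Xi$, and this is precisely where the hypothesis $F' (1) \le 1$ is consumed. I would pass to the right half-plane by the Cayley transform $C (z) = (1+z)/(1-z)$ and set $G = C \circ F \circ C^{-1}$, a holomorphic self-mapping of $\{ \Re w > 0 \}$ with Denjoy-Wolff point at $\infty$. A short computation identifies $1/(G (w) - w)$ with $\Xi$ transported by $C$, so it suffices to prove that $w \mapsto G (w) - w$ again maps the half-plane into its closure. Here the Nevanlinna representation of $G$ is decisive: its linear coefficient equals $\angle\lim_{w \to \infty} G (w)/w = 1/F' (1) \ge 1$, so $G (w) - w = \alpha w + \imath \gamma + (\text{Herglotz integral})$ with $\alpha = 1/F' (1) - 1 \ge 0$, which is still a self-map of the closed right half-plane. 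Equivalently this is the Julia-Wolff-Carath\'{e}odory inequality at the Denjoy-Wolff point, and it is exactly $1/F' (1) \ge 1$ that secures the correct sign.

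With $\Re\,\Xi \ge 0$ in hand I would invoke the Riesz-Herglotz formula, writing
\[
   \Xi (z)
 = \imath\, \Im\, \Xi (0)
 + \oint_{|\xi| = 1} \frac{\xi + z}{\xi - z}\, d\mu (\xi)
\]
with $\mu \ge 0$ of total mass $\mu (\partial \D) = \Re\,\Xi (0)$. Matching the simple pole of $\Xi$ at $\zeta_n$ against the atomic part of the Poisson kernel, and using the identity $(1 - \zeta_n)^2/\zeta_n = - |1 - \zeta_n|^2$, forces an atom of $\mu$ at $\zeta_n$ of mass $m_n = \tfrac{|1 - \zeta_n|^2}{4\,(F' (\zeta_n) - 1)} \ge 0$. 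Since distinct atoms only add to the total mass, $\sum_{n=1}^N m_n \le \mu (\partial \D) = \Re\,\Xi (0) = \tfrac12\,\Re\big(\tfrac{1}{F (0)} - 1\big)$, and multiplying by $4$ gives precisely (\ref{eq.gthm}).

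For the equality statement I would observe that equality in (\ref{eq.gthm}) holds if and only if $\mu$ carries no mass outside $\{ \zeta_1, \dots, \zeta_N \}$, i.e. $\mu = \sum_n m_n\, \delta_{\zeta_n}$. In that case $\Xi$ is the explicit rational function $\imath\, \Im\, \Xi (0) + \sum_n m_n\, \tfrac{\zeta_n + z}{\zeta_n - z}$, and inverting $C (F (z)) = C (z) + 1/\Xi (z)$ reproduces $F$ in the closed form (\ref{eq.extremalF}); the residual imaginary constant is pinned down, when $F' (1) < 1$, by $\Xi (1) = 0$ coming from $G (w) - w \sim (1/F' (1) - 1)\, w$ at infinity. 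Such an $F$ is a finite Blaschke product of order $N$, which is exactly the extremal configuration for assertion 2) of Theorem \ref{t.CowePomm82}, and conversely that extremal renders $\Xi$ purely atomic; this identifies the two equality conditions. I expect the only delicate point beyond the sign of $\Re\,\Xi$ to be the verification that the purely atomic $\Xi$ indeed yields both (\ref{eq.extremalF}) and the Blaschke extremal of assertion 2), a finite rational computation rather than a conceptual difficulty.
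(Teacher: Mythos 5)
Your proof is correct, and it takes a genuinely different route from the paper's. The key observation is that your function $\Xi$ is exactly the reciprocal of the paper's auxiliary function: writing $p (z) = \frac{1 + F (z)}{1 - F (z)} - \frac{1 + z}{1 - z}$ one computes $p (z) = \frac{2 (F (z) - z)}{(1 - F (z)) (1 - z)}$, so $\Xi = 1/p$, and your half-plane argument for $\Re\, \Xi \geq 0$ is the same Julia's-lemma input the paper uses for $\Re\, p \geq 0$. From there the two proofs diverge. The paper forms the hyperbolic generator $f (z) = - (1 - z)^2 p (z)$, computes $f' (\zeta_n) = - 2 (F' (\zeta_n) - 1)$ and $f' (1) = 2 (1/F' (1) - 1)$, and then simply cites Theorem \ref{t.ContDiazPomm06} for the inequality and Corollary \ref{c.CowePomm82} 2) together with the unified extremal characterization stated before Theorem \ref{t.same} for the equality case. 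You instead apply the Riesz--Herglotz representation directly to $\Xi$, identify the point masses $\mu (\{\zeta_n\}) = \frac{|1 - \zeta_n|^2}{4 (F' (\zeta_n) - 1)}$ from the angular residues (this is exactly the content of the paper's Lemma \ref{l.JWC}), and read off \eqref{eq.gthm} from $\sum_n \mu (\{\zeta_n\}) \leq \mu (\partial \D) = \Re\, \Xi (0)$; your treatment of equality as ``$\mu$ purely atomic'' then yields \eqref{eq.extremalF} after pinning the imaginary constant via $\Xi (1) = 0$. What your approach buys is self-containedness: you do not need Theorem \ref{t.ContDiazPomm06} or the separation-of-singularities machinery, in effect re-deriving the needed special case of that theorem from the Herglotz representation. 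What the paper's approach buys is that the equivalence with equality in assertion 2) of Theorem \ref{t.CowePomm82} comes for free from Theorem \ref{t.completionCowePomm82}, whereas you defer it to the identification of \eqref{eq.extremalF} with an order-$N$ Blaschke product and the converse (a rational self-map unimodular on $\partial \D$ with exactly $N$ boundary fixed points besides $1$ making $\mu$ purely atomic); that step is routine but should be written out, since it is the only place where the two equality conditions actually get tied together.
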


\begin{proof}
Consider the holomorphic function $p$ in $\D$ defined by
\begin{equation}
\label{eq.p(z)}
   p (z)
 = \frac{1 + F (z)}{1 - F (z)} - \frac{1 + z}{1 - z}
\end{equation}
for $z \in \D$. Since $a = 1$ is the Denjoy-Wolff point of $F$ we
deduce from Julia's lemma that
   $\Re p (z) \geq 0$
for $z \in \D$. Therefore, the function
$$
   f (z) = - (1-z)^2\, p (z)
$$
is the generator of a semigroup of holomorphic self-mappings of
$\D$ with the
   Denjoy-Wolff point $a = 1$ and
   boundary regular null points $\zeta_1, \ldots, \zeta_N$.

Indeed, it is clear that $
   f (\zeta_n) = - (1-\zeta_n)^2\, p (\zeta_n)
$ vanishes and
\begin{eqnarray*}
   f' (\zeta_n)
 & = &
   \lim_{z \to \zeta_n} \frac{f (z)}{z - \zeta_n}
\\
 & = &
   -\,
   (1-\zeta_n)^2\,
   \lim_{z \to \zeta_n} \frac{p (z)}{z - \zeta_n}
\\
 & = &
   -\,
   (1-\zeta_n)^2\,
   p' (\zeta_n)
\end{eqnarray*}
for all $n = 1, \ldots, N$. Using (\ref{eq.p(z)}) yields
$$
   p' (\zeta_n)
 = (F' (\zeta_n) - 1)\, \frac{2}{(1 - \zeta_n)^2}
$$
whence
\begin{eqnarray*}
   f' (\zeta_n)
 & = &
   - 2\, (F' (\zeta_n) - 1)
\\
 & < &
   0.
\end{eqnarray*}

Furthermore,
\begin{eqnarray}
\label{eq.f'(1)}
   f' (1)
 & = &
   \lim_{z \to 1}
   (1-z)\, p (z)
\nonumber
\\
 & = &
   \lim_{z \to 1}
   \frac{1 - z}{1 - F (z)}\, (1 + F (z))
 - 2
\nonumber
\\
 & = &
   2
   \Big( \frac{1}{F' (1)} - 1 \Big)
\nonumber
\\
 & \geq &
   0.
\end{eqnarray}
Hence it follows from Theorem \ref{t.ContDiazPomm06} that
\begin{equation}
\label{eq.ContDiazPomm06m}
   \sum_{n=1}^N \frac{1}{2}\, \frac{|1 - \zeta_n|^2}{|f' (\zeta_n)|}
 \leq - \Re \Big( \frac{1}{f (0)} \Big).
\end{equation}
Since
$$
   f (0)
 = - p (0)
 = \frac{F (0) + 1}{F (0) - 1} - 1
 = 2\, \frac{F (0)}{F (0) - 1},
$$
the inequalities (\ref{eq.gthm}) and
               (\ref{eq.ContDiazPomm06m})
are equivalent. Note that (\ref{eq.f'(1)}) shows that $f$ is of
hyperbolic type if and only if so is $F$.

Assume that $F' (1) < 1$. Then $f' (1) > 0$, and so we conclude
from the second assertion of
   Corollary \ref{c.CowePomm82}
that
\begin{equation}
\label{eq.saoc}
   \sum_{n=1}^N \frac{1}{|f' (\zeta_n)|}
 \leq \frac{1}{f' (1)}.
\end{equation}
This inequality, in turn, proves to be equivalent to the
inequality of the assertion 2) of Theorem \ref{t.CowePomm82}.

As already mentioned before Theorem \ref{t.same}, equalities in
either of
   (\ref{eq.ContDiazPomm06m})
   (\ref{eq.saoc})
is achieved if and only if
$$
   \frac{1}{f (z)}
 = \sum_{n=1}^N
   \frac{1}{|f' (\zeta_n)|}
   \Big( \frac{1}{z-1} - \frac{1}{z-\zeta_n} \Big).
$$
On combining this equality with
   $f (z) = - (1-z)^2 p (z)$
we get (\ref{eq.extremalF}), as desired.
\end{proof}

The particular case of Theorem \ref{t.P2} corresponding to $N = 1$
seems surprisingly to be new.

\begin{corollary}
\label{c.P2} Suppose $F$ is a holomorphic self-mapping of $\D$,
such that
   $F (1) = 1$ and $0 < F' (1) < 1$.
Let $\zeta \neq 1$ be a boundary regular fixed point of $F$. Then
$F$ is an automorphism of $\D$ if and only if
$$
   \frac{1 - \Re \zeta}{F' (\zeta) - 1}
 = \Re \Big( \frac{1}{F (0)} - 1 \Big).
$$
\end{corollary}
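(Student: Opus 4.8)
The plan is to recognise the identity asserted in the corollary as the equality case of the inequality (\ref{eq.gthm}) specialised to a single boundary fixed point $\zeta_1 = \zeta$, and then to read off what that equality forces by combining the equivalences already assembled in Theorem \ref{t.P2} and Theorem \ref{t.CowePomm82}.

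First I would record the elementary identity $|1 - \zeta|^2 = 2 (1 - \Re \zeta)$, which holds because $|\zeta| = 1$. Substituting this into (\ref{eq.gthm}) with $N = 1$ rewrites that inequality as
$$
   \frac{1 - \Re \zeta}{F' (\zeta) - 1}
 \leq
   \Re \Big( \frac{1}{F (0)} - 1 \Big),
$$
so the equation displayed in the corollary is \emph{precisely} the equality case of (\ref{eq.gthm}) for $N = 1$. Since $\zeta$ is a boundary regular fixed point different from the Denjoy-Wolff point $a = 1$, Julia's lemma gives $F' (\zeta) > 1$, so both sides are genuinely positive and the expressions are well posed.

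Next I would invoke the \emph{Moreover} clause of Theorem \ref{t.P2}: the hypothesis $0 < F' (1) < 1$ falls into the regime $F' (1) < 1$ in which that clause applies, and it tells us that equality in (\ref{eq.gthm}) holds if and only if equality holds in assertion 2) of Theorem \ref{t.CowePomm82}. By the final sentence of Theorem \ref{t.CowePomm82}, equality in case 2) with $N = 1$ holds if and only if $F$ is a Blaschke product of order $1$. Since the Blaschke products of order $1$ are exactly the M\"obius automorphisms of $\D$, this identifies the equality case with the statement that $F$ is an automorphism of $\D$, which is one direction of the corollary.

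For the converse I would simply run this chain backwards: an automorphism of $\D$ fixing the two distinct boundary points $1$ and $\zeta$ with $F' (1) < 1$ is a hyperbolic automorphism, hence a Blaschke product of order $1$, so by Theorem \ref{t.CowePomm82} equality holds in case 2), and therefore, via the equivalence of Theorem \ref{t.P2}, in (\ref{eq.gthm}) as well. I do not expect any substantial new estimate here; the only point requiring care is the bookkeeping that matches the three equivalent equality conditions---equality in (\ref{eq.gthm}), equality in assertion 2) of Theorem \ref{t.CowePomm82}, and the order-$1$ Blaschke (that is, automorphism) condition---since all of the analytic content is already contained in Theorems \ref{t.P2} and \ref{t.CowePomm82}.
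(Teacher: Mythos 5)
Your proposal is correct and follows essentially the same route as the paper, which presents Corollary~\ref{c.P2} simply as the $N=1$ specialization of Theorem~\ref{t.P2}: the identity $|1-\zeta|^2 = 2(1-\Re\zeta)$ converts the corollary's equation into the equality case of (\ref{eq.gthm}), and the \emph{Moreover} clause of Theorem~\ref{t.P2} together with the Blaschke-product characterization in Theorem~\ref{t.CowePomm82} identifies that equality case with $F$ being an automorphism. Your bookkeeping of the chain of equivalences, including the converse via hyperbolic automorphisms being Blaschke products of order one, matches what the paper leaves implicit.
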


Finally we note that Corollary~\ref{c.simpler} gains in interest
if we realize that Theorem \ref{t.P2} is actually its easy
consequence. Indeed, applying Corollary \ref{c.simpler} to the
dilation type generator
   $f (z) = z\, p (z)$
with
$$
   p (z)
 = \frac{1 + F (z)}{1 - F (z)} - \frac{1 + z}{1 - z}
$$
in $\D$ we get Theorem \ref{t.P2}. Recall that $- (1-z)^2 p (z)$
is the generator of a semigroup with the
   Denjoy-Wolff point $a = 1$
and
   boundary regular null points $\zeta_1, \ldots, \zeta_N$,
cf. the proof of Theorem \ref{t.P2}. We have
   $f (z) = k (z)\, (1-z)^2 p (z)$
where $k$ is the Koebe univalent function
$$
   k (z) = \frac{z}{(1-z)^2}
$$
in $\D$. One verifies immediately that
   Corollary \ref{c.simpler} for $f (z) := z\, p (z)$ is equivalent to
   Corollary \ref{c.bDWp} for $f (z) := - (1-z)^2 p (z)$.


\end{document}